\newtheorem{theorem}{Theorem}[section]
\newtheorem{lemma}[theorem]{Lemma}
\newtheorem{corollary}[theorem]{Corollary}
\newtheorem{question}[theorem]{Question}
\newtheorem*{TTheorem}{Theorem I (1991)}
\newtheorem*{TBrennan}{Theorem II (2008)}
\newtheorem*{ARSTheorem}{Theorem III (2009)}
\newtheorem*{TTolsa}{Theorem IV (Results of Tolsa)}
\newtheorem*{PFormula}{Plemelj's Formula}
\theoremstyle{definition}
\theoremstyle{remark}
\newtheorem{remark}[theorem]{Remark}
\newtheorem{notation}[theorem]{Notation}
\numberwithin{equation}{section}
\begin{document}

\setcounter{page}{1}

\title[On Nontangential Limits and Shift Invariant subspaces]{On Nontangential Limits and Shift Invariant subspaces}

\author[J. Akeroyd, J. Conway, \MakeLowercase{and} L. Yang]{John R. Akeroyd,$^1$ John B. Conway,$^2$ \MakeLowercase{and} Liming Yang$^3$}

\address{$^{1}$Department of Mathematics, University of Arkansas, Fayetteville, AR 72701}
\email{\textcolor[rgb]{0.00,0.00,0.84}{jakeroyd@uark.edu}}

\address{$^{2}$Department of Mathematics, The George Washington University, Washington, DC 20052}
\email{\textcolor[rgb]{0.00,0.00,0.84}{conway@gwu.edu}}

\address{$^3$Department of Mathematics, Virginia Polytechnic and State University, Blacksburg, VA 24061.}
\email{\textcolor[rgb]{0.00,0.00,0.84}{yliming@vt.edu}}



\subjclass[2010]{Primary 47A15; Secondary 30C85, 31A15, 46E15, 47B38}

\keywords{Nontangential Limits, Shift Invariant Subspaces and Bounded Point Evaluations}


\begin{abstract} In 1998, John B. Conway and Liming Yang wrote a paper \cite{cy98} in which they posed 
a number of open questions regarding the shift on $P^t(\mu)$ spaces. A few of these have been completely 
resolved, while at least one remains wide open. In this paper, we review some of the solutions, mention 
some alternate approaches and discuss further the problem that remains unsolved.
\end{abstract} \maketitle

\section{\textbf{Introduction}}

Throughout this paper, let $\mathbb{D}$ denote the unit
disk $\{z: |z| < 1\}$ in the complex plane $\mathbb{C}$, let
$\mathbb{T}$ denote the unit circle $\{z: |z| =1\}$, let $A$
denote normalized area measure on $\mathbb{D}$ and let
$m$ denote normalized Lebesgue measure on $\mathbb{T}$. 
Part of our plan here is to review the status of problems posed in 
\cite{cy98}. Some of these have been solved with a vengeance,
while others remain open. We have a brief retrospective on the solutions
in the literature and outline some recent alternatives to these that reach into 
the context of approximation by rational functions. We also make some observations
concerning the problem(s) that remain open. Along the way we mention some related open
questions. We begin by setting the context of our discussion, that includes three major theorems
listed in chronological order. Herein, let $\mu$ be a finite, positive Borel measure that is compactly 
supported in $\mathbb{C}$. Sometimes we require that the support of $\mu$ be contained in
some compact set $K$ and we indicate this by $\mbox{spt}(\mu)~\subseteq~K$. Under these circumstances
and for $1\leq t < \infty$, the analytic polynomials and functions in $\mbox{Rat}(K) := \{q:\mbox{$q$ is a rational function 
with poles off $K$}\}$ are members of $L^t(\mu)$. We let $P^t(\mu)$ denote the closure of the (analytic) 
polynomials in $L^t(\mu)$ and let $R^t(K, \mu)$ denote the closure of $\mbox{Rat}(K)$ in $L^t(\mu)$.
A point $z_0$ in $\mathbb{C}$ is called a \textit{bounded point evaluation} for $P^t(\mu)$ (resp., $R^t(K, \mu)$)
if $f\mapsto f(z_0)$ defines a bounded linear functional for the analytic polynomials (resp., functions in $\mbox{Rat}(K)$)
with respect to the $L^t(\mu)$ norm. The collection of all such points is denoted $\mbox{bpe}(P^t(\mu))$ 
(resp., $\mbox{bpe}(R^t(K, \mu)$)).  If $z_0$ is in the interior of $\mbox{bpe}(P^t(\mu))$ (resp., $\mbox{bpe}(R^t(K, \mu)$)) 
and there exist positive constants $r$ and $M$ such that $|f(z)| \leq M\|f\|_{L^t(\mu)}$, whenever $|z - z_0|\leq r$ 
and $f$ is an analytic polynomial (resp., $f\in \mbox{Rat}(K)$), then we say that $z_0$ is an 
\textit{analytic bounded point evaluation} for $P^t(\mu)$ (resp., $R^t(K, \mu)$). The collection of all such 
points is denoted $\mbox{abpe}(P^t(\mu))$ (resp., $\mbox{abpe}(R^t(K, \mu)$)). Actually, it follows from Thomson's Theorem
\cite{thomson} (or see Theorem I, below) that $\mbox{abpe}(P^t(\mu))$ is the interior of $\mbox{bpe}(P^t(\mu))$. 
This also holds in the context of $R^t(K, \mu)$ as was shown by J. Conway and N. Elias in \cite{ce93}. Now, 
$\mbox{abpe}(P^t(\mu))$ is the largest open subset of $\mathbb{C}$ to which every function in $P^t(\mu)$ has an analytic 
continuation under these point evaluation functionals, and similarly in the context of $R^t(K, \mu)$.

Our story begins with celebrated results of J. Thomson, in \cite{thomson}. 

\begin{TTheorem} 

Let $\mu$ be a finite, positive Borel measure that is compactly supported in $\mathbb{C}$ and suppose that $1\leq t < \infty$.

(a) There is a Borel partition $\{\Delta_i\}_{i=0}^\infty$ of $\mbox{spt}(\mu)$ such that the space $P^t(\mu |_{\Delta_i})$ 
contains no nontrivial characteristic function (i.e., 
 $P^t(\mu |_{\Delta_i})$ is irreducible) and
 \[
 \ P^t(\mu ) = L^t(\mu |_{\Delta_0})\oplus \left \{ \oplus _{i = 1}^\infty P^t(\mu |_{\Delta_i}) \right \}.
 \]
Furthermore, if $U_i :=abpe( P^t(\mu |_{\Delta_i}))$ for $i \ge 1,$ then $U_i$ is a simply connected region and $\Delta_i\subseteq \overline{U_i}$

(b) If $U$ is a bounded, simply connected region, then there is a finite, positive Borel measure that is compactly supported
in $\mathbb{C}$ such that $P^t(\mu )$ is irreducible and $\mbox{abpe}(P^t(\mu)) = U.$

\end{TTheorem}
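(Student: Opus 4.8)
The plan is to isolate in part (a) a single hard analytic ingredient — Thomson's approximation scheme — and derive the structural statement from it by soft functional analysis, and then to prove (b) by an explicit conformal pullback that invokes (a).

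\emph{Soft setup for (a).} First I would record the dual picture: $P^t(\mu)\subsetneq L^t(\mu)$ exactly when some nonzero $g$ in the dual of $L^t(\mu)$ annihilates the polynomials, in which case $\nu:=g\,d\mu$ has a Cauchy transform $\widehat\nu(z)=\int(w-z)^{-1}\,d\nu(w)$ that is defined for a.e.\ (area) $z$, is analytic off $\mbox{spt}(\mu)$, vanishes at $\infty$, and is not identically zero; dually, $z_0$ is a bounded point evaluation iff there are polynomials $p_n$ with $p_n(z_0)=1$ and $\sup_n\|p_n\|_{L^t(\mu)}<\infty$, and an analytic bounded point evaluation iff this can be arranged locally uniformly near $z_0$. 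Second I would set up the Boolean bookkeeping: the characteristic functions $\chi_\Delta$ lying in $P^t(\mu)$ are precisely those for which $P^t(\mu)=P^t(\mu|_\Delta)\oplus P^t(\mu|_{\mathbb{C}\setminus\Delta})$ (multiplication by such a $\chi_\Delta$ preserves $P^t(\mu)$), and they form a complete Boolean algebra with at most countably many atoms; let $\{\Delta_i\}_{i\ge1}$ be those of its atoms for which $\mu|_{\Delta_i}$ is not a point mass, and let $\Delta_0$ collect everything else. Then each $P^t(\mu|_{\Delta_i})$ ($i\ge1$) is irreducible with $\mu|_{\Delta_i}$ not a point mass, so $P^t(\mu|_{\Delta_i})\ne L^t(\mu|_{\Delta_i})$ ($L^t(\nu)$ being irreducible forces $\nu$ to be a point mass), and the displayed $\oplus$-decomposition holds by construction; what remains is to prove $P^t(\mu|_{\Delta_0})=L^t(\mu|_{\Delta_0})$ and the geometry of $U_i:=\mbox{abpe}(P^t(\mu|_{\Delta_i}))$.

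\emph{The hard step.} The essential content is Thomson's theorem proper, which I would establish via his coloring scheme. Decompose the plane into dyadic squares and, using an annihilating measure $\nu$ and a Vitushkin-type localization operator, label each square ``light'' (polynomials can be made $L^t$-small on it while remaining controlled off it) or ``heavy'' (the localized Cauchy transform of $\nu$ is quantitatively large on a substantial portion of it). Along a chain of heavy squares contracting to a point $\lambda$, the evaluation functionals at points of the chain are forced to converge — and this is where lower bounds for analytic capacity enter, through Melnikov's curvature and $L^2$-boundedness of the Cauchy transform, following Thomson's original estimates or, more efficiently, Tolsa's results on analytic capacity — producing an analytic bounded point evaluation at $\lambda$ together with a characteristic function in $P^t(\nu)$ separating the analytic structure there from the rest of the support; whereas a region exhausted by light squares is one over which polynomials can be pushed to a nontrivial idempotent vanishing on that region. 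Making the coloring, the stopping time, and the capacity estimates mesh is where all the difficulty lies. Extracting these analytic pieces simultaneously gives the partition, shows each $U_i$ ($i\ge1$) is a nonempty open region with $\mbox{spt}(\mu|_{\Delta_i})\subseteq\overline{U_i}$, and gives $P^t(\mu|_{\Delta_0})=L^t(\mu|_{\Delta_0})$ (on $\Delta_0$ no such piece survives, since one would be an idempotent-atom there, of which there are none). It remains to see that each $U_i$ is \emph{simply connected}: a bounded component $V$ of $\mathbb{C}\setminus U_i$ carries no $\mu$-mass (points of $V$ have neighborhoods inside $\mathbb{C}\setminus U_i$, so $V\cap\overline{U_i}=\emptyset$, hence $V$ misses $\mbox{spt}(\mu|_{\Delta_i})\subseteq\overline{U_i}$), and a Cauchy-transform and maximum-principle argument across $\partial V\subseteq\overline{U_i}$ then forces every point of $V$ to be an analytic bounded point evaluation, contradicting $V\cap U_i=\emptyset$ — equivalently, the hole $V$ would split $\mu|_{\Delta_i}$, contradicting irreducibility.

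\emph{Proof of (b).} Given a bounded simply connected region $U$, let $\varphi\colon\mathbb{D}\to U$ be a Riemann map and put $\mu:=\varphi_{*}(A)$, the push-forward of $A$ under $\varphi$; this is a finite positive Borel measure compactly supported on $\overline U$. Change of variables makes $f\mapsto f\circ\varphi$ an isometry of $L^t(\mu)$ onto $L^t(A)$ carrying $P^t(\mu)$ onto the closure of $\{p\circ\varphi : p\text{ a polynomial}\}$ inside $P^t(A)$; since that closure consists of holomorphic functions on $\mathbb{D}$, a nontrivial idempotent of $P^t(\mu)$ would give a holomorphic function equal $A$-a.e.\ to a characteristic function, which is impossible, so $P^t(\mu)$ is irreducible. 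Since point evaluations on $P^t(A)$ are bounded, locally uniformly on $\mathbb{D}$, we get $|p(\varphi(w_0))|=|(p\circ\varphi)(w_0)|\le C_{w_0}\|p\circ\varphi\|_{L^t(A)}=C_{w_0}\|p\|_{L^t(\mu)}$, uniformly for $\varphi(w_0)$ in compact subsets of $U$, so $U\subseteq\mbox{abpe}(P^t(\mu))$. For the reverse inclusion I would invoke part (a): $\mbox{abpe}(P^t(\mu))$ is then a simply connected region $W\supseteq U$, and one shows $W=U$ by exhibiting, for each point of $\partial U$, a function in $P^t(\mu)$ that does not continue analytically across it — here every holomorphic function on $U$ is a locally uniform limit of polynomials by Runge's theorem (valid because $\mathbb{C}\setminus U$ is connected for $U$ bounded and simply connected), and a maximum-modulus argument rules out spurious analytic bounded point evaluations inside any complementary hole. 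Pinning down that $W$ is no larger than $U$ is the one genuinely delicate point in (b), the rest being routine.
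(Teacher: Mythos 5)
The statement you are proving is not proved in this paper at all: Theorem~I is quoted from Thomson \cite{thomson}, so the only fair comparison is with Thomson's original argument, and against that standard your part (a) is an outline rather than a proof. Everything you yourself label ``the hard step'' --- the dyadic coloring into light and heavy squares, the Vitushkin localization, S.~Brown's technique, the capacity lower bounds that force convergence of evaluation functionals, and the passage from these local conclusions to the global direct-sum decomposition with a full $L^t(\mu|_{\Delta_0})$ summand and to $\Delta_i\subseteq\overline{U_i}$ --- is described, not carried out, and that is where the entire content of the theorem lives. Even your ``soft'' reduction is not purely soft: that the Boolean algebra of splitting characteristic functions is atomic, that its atoms can be enumerated so that the residual piece is a \emph{full} $L^t$ space, and that each atom yields an irreducible $P^t(\mu|_{\Delta_i})$ with nonempty $\mbox{abpe}$, are themselves consequences of Thomson's machinery (via the existence of bounded point evaluations), not of duality bookkeeping alone.

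Part (b) has a genuine error, not merely a deferred step. For $\mu=\varphi_{*}A$ the set $\mbox{abpe}(P^t(\mu))$ can be strictly larger than $U$: take $U=\mathbb{D}\setminus[0,1)$. Near a point $x\in(0,1)$ each side of the slit is a free analytic boundary arc, so $\varphi^{-1}$ extends conformally across it and the density of $\varphi_{*}A$ with respect to area is bounded below on $B(x,r)\cap U$ for small $r$; since the slit has zero area, subharmonicity of $|p|^t$ gives
\[
|p(x)|^t\le\frac{1}{\pi r^2}\int_{B(x,r)}|p|^t\,dA\le C\int |p|^t\,d\mu ,
\]
with constants locally uniform in $x$, so every slit point is an analytic bounded point evaluation and $W=\mbox{abpe}(P^t(\mu))\supsetneq U$. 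Thus the ``one genuinely delicate point'' you set aside --- showing $W\subseteq U$ --- is exactly where your construction fails, and no maximum-modulus or Runge argument can rescue it, because the inclusion is false for this $\mu$. A correct proof of (b) must build the measure so that bounded evaluation is destroyed at \emph{every} boundary point (for instance, area-type measure with weight decaying sufficiently rapidly near $\partial U$, with a Keldysh-type argument killing evaluations on boundary spikes while preserving irreducibility and $U\subseteq\mbox{abpe}$); establishing that is the substance of part (b) in \cite{thomson}.
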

\bigskip

The next result in our list is due to J. Brennan and it deals with rational approximation, in certain cases.
\bigskip

\begin{TBrennan} Suppose that $1\le t < \infty$ and the diameters of components of $\mathbb C\setminus K$ are bounded below. 
Then there is a Borel partition $\{\Delta_i\}_{i=0}^\infty$ of $\mbox{spt}(\mu)$ such that $R^t(K, \mu |_{\Delta_i})$ is irreducible and
 \[
 \ R^t(K, \mu ) = L^t(\mu |_{\Delta_0})\oplus \left \{ \oplus _{i = 1}^\infty R^t(K, \mu |_{\Delta_i}) \right \}.
 \]
Furthermore, if $U_i :=\mbox{abpe}( R^t(K, \mu |_{\Delta_i}))$, then $\Delta_i\subset \overline{U_i}.$
\end{TBrennan}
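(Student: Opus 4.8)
The plan is to mimic the structure of Thomson's decomposition theorem (Theorem I), replacing the role of analytic polynomials by $\mathrm{Rat}(K)$ and using the hypothesis on the diameters of the components of $\mathbb{C}\setminus K$ to recover the quantitative estimates that in the polynomial case come for free from the boundedness of the disk. First I would establish the existence of the partition producing the direct sum decomposition. This is a soft, Hilbert/Banach-module argument: the multiplication operator $M_z$ on $R^t(K,\mu)$ is subnormal-like, and one decomposes $R^t(K,\mu)$ into its "pure" and "reductive" parts and then iterates. Concretely, let $\Delta_0$ be the largest Borel set (up to $\mu$-null sets) for which $\chi_{\Delta_0}\in R^t(K,\mu)$ and $R^t(K,\mu|_{\Delta_0})=L^t(\mu|_{\Delta_0})$; existence follows from a maximality argument since a countable union of such sets again has the property. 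On the complement, one shows that any nontrivial characteristic function in $R^t(K,\mu)$ splits the space, and an exhaustion by a maximal family of mutually disjoint minimal (irreducible) summands $\{\Delta_i\}_{i\ge 1}$ yields $R^t(K,\mu)=L^t(\mu|_{\Delta_0})\oplus\bigl(\bigoplus_{i\ge1}R^t(K,\mu|_{\Delta_i})\bigr)$, with each $R^t(K,\mu|_{\Delta_i})$ irreducible by construction.

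The substantive part is the last sentence: $\Delta_i\subseteq\overline{U_i}$ where $U_i=\mathrm{abpe}(R^t(K,\mu|_{\Delta_i}))$. The plan is to argue by contradiction. Suppose $\mu|_{\Delta_i}$ puts positive mass on a compact set $F$ disjoint from $\overline{U_i}$. The goal is to produce a nontrivial characteristic function in $R^t(K,\mu|_{\Delta_i})$ — which contradicts irreducibility. This is exactly where Brennan's hypothesis enters and where I expect the main obstacle to lie: one needs a Thomson-type estimate showing that if a point $\lambda$ fails to be an (analytic) bounded point evaluation, then there are functions in $\mathrm{Rat}(K)$ that are uniformly bounded, small near $\lambda$, and close to $1$ off a small set — and to run the iterated "coloring scheme" of Thomson's proof one must control the contributions of all the holes of $K$. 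When the components of $\mathbb{C}\setminus K$ have diameters bounded below, the relevant capacities (analytic/Ahlfors–Beurling capacity) of the holes are bounded below as well, which is precisely what is needed to make the Vitushkin-type localization and the estimates on Cauchy transforms go through uniformly; without this hypothesis the holes can be arbitrarily small and the scheme fails.

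Concretely, the steps in order are: (1) recall the characterization, due to Conway–Elias, that $\mathrm{abpe}(R^t(K,\mu))$ is the interior of $\mathrm{bpe}(R^t(K,\mu))$, so it suffices to work with bounded point evaluations; (2) fix an irreducible piece and suppose the support conclusion fails, localizing to a small disk $D$ where $\mu$ charges a set far from $U_i$; (3) using the lower bound on the diameters of the components of $\mathbb{C}\setminus K$, invoke the relevant estimate (Brennan's refinement of Thomson's lemma via the Cauchy transform and analytic capacity) to produce peak-type functions $f_n\in\mathrm{Rat}(K)$ that are uniformly bounded in $L^\infty$, tend to $0$ in $L^t(\mu)$ on a neighborhood of the bad set, yet do not tend to $0$ globally; (4) pass to a weak-$*$ limit of $f_n$ (or of a suitable modification $1-f_n$) in $R^t(K,\mu)$ and identify the limit as a nonconstant idempotent, i.e. a nontrivial characteristic function; (5) conclude that this contradicts the irreducibility of $R^t(K,\mu|_{\Delta_i})$, so $\Delta_i\subseteq\overline{U_i}$. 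The delicate point throughout is step (3): keeping the estimates uniform across infinitely many holes of $K$, which is the sole place the hypothesis "$\mathbb{C}\setminus K$ has components of diameter bounded below" is used, and replicating Thomson's combinatorial construction in this rational setting is where the real work resides.
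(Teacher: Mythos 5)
First, a point of comparison: the paper does not prove this statement at all --- Theorem II is quoted as background, with the proof residing in Brennan's paper \cite{b08}, which adapts Thomson's scheme to $R^t(K,\mu)$ using Tolsa's capacity theorems. So your proposal can only be judged on its own terms, and on those terms it is a road map rather than a proof. You correctly locate the substantive content in the statement $\Delta_i\subseteq\overline{U_i}$, and your heuristic for the hypothesis is right in spirit (a component of $\mathbb{C}\setminus K$ of diameter at least $d$ has analytic capacity bounded below, which is what keeps the estimates uniform over the holes and excludes Swiss-cheese pathologies). But the step that carries all the weight --- your step (3), producing the peak-type functions in $\mathrm{Rat}(K)$ via ``Brennan's refinement of Thomson's lemma'' --- is invoked by name rather than argued. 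That construction (the iterated coloring scheme, the weak-type estimate $\gamma(\{\mathcal{C}_*(\nu)\ge a\})\le C\|\nu\|/a$, semiadditivity of $\gamma$, and the localization that exploits the lower bound on the diameters of the complementary components) \emph{is} the theorem; citing it leaves essentially nothing proved.

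There is also a genuine gap in the part you call soft. A maximality argument does give a largest $\Delta_0$ on which the space is full $L^t$, and any characteristic function in $R^t(K,\mu)$ does split the space; but it does not follow softly that what remains partitions into countably many irreducible summands. The Boolean algebra of characteristic functions lying in $R^t(K,\mu)$ could a priori be atomless on a set of positive measure disjoint from $\Delta_0$ without the space being all of $L^t$ there, so a maximal family of irreducible summands need not exhaust the complement of $\Delta_0$. Ruling this out is essentially equivalent to the bounded-point-evaluation existence theorem itself: in Thomson's and Brennan's arguments the sets $\Delta_i$ are manufactured from the components $U_i$ of the abpe set by the same hard approximation scheme, and the identification of the residual summand with $L^t(\mu|_{\Delta_0})$ is a consequence of that analysis, not an input to it. So both halves of the statement ultimately rest on the construction you deferred; as written, the proposal identifies the right strategy but supplies neither the decomposition nor the support statement.
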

\bigskip

Lastly we mention a result of A. Aleman, S. Richter and C. Sunberg. It's proof involves a modification of Thomson's scheme along
with results of X. Tolsa on analytic capacity. 
\bigskip

\begin{ARSTheorem} \label{ARSTheorem}
Suppose that $\mu$ is supported in $\overline{\mathbb D}$, 
$abpe (P^t(\mu )) = \mathbb{D}$, $P^t(\mu )$ is irreducible, and that $\mu (\mathbb{T})> 0$.
\newline
(a) If $f \in P^t(\mu )$, then the nontangential limit $f^*(\zeta )$ of f at $\zeta$ exists a.e. $\mu |_{\mathbb{T}}$ 
and $f^* = f |_{\mathbb{T}}$ as elements of $L^t(\mu |_{\mathbb{T}}).$
\newline
(b) Every nontrivial, closed invariant subspace $\mathcal{M}$ for the shift $S_{\mu}$ on $P^t(\mu )$ has index 1; that is, the dimension
of $\mathcal{M}/z\mathcal{M}$ is one.
\end{ARSTheorem}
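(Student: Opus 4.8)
The plan is to prove part (a) first and deduce part (b) from it, obtaining (a) by running Thomson's approximation scheme (Theorem I) with the classical subadditivity of analytic capacity replaced by Tolsa's semiadditivity theorem and the attendant curvature/$T(b)$ estimates for Cauchy transforms. To set up (a), for $\zeta\in\mathbb T$ and $\alpha>1$ let $\Gamma_\alpha(\zeta)\subseteq\mathbb D$ denote the Stolz region with vertex $\zeta$, and for $f$ analytic on $\mathbb D$ set $N_\alpha f(\zeta)=\sup_{z\in\Gamma_\alpha(\zeta)}|f(z)|$. Since $\mathrm{abpe}(P^t(\mu))=\mathbb D$, every $f\in P^t(\mu)$ has an analytic representative on $\mathbb D$ agreeing $\mu|_{\mathbb D}$-a.e. with its $L^t(\mu)$-class, and $p\mapsto p(z)$ is $L^t(\mu)$-bounded for each $z\in\mathbb D$; because the polynomials are dense, it suffices to prove
\[
\|N_\alpha f\|_{L^t(\mu|_{\mathbb T})}\le C_\alpha\|f\|_{L^t(\mu)}\qquad\text{for every analytic polynomial }f.
\]
Granting this, a telescoping/Borel--Cantelli argument (choose polynomials $p_n\to f$ with $\sum_n\|p_{n+1}-p_n\|_{L^t(\mu)}<\infty$, whence $\sum_n N_\alpha(p_{n+1}-p_n)<\infty$ $\mu|_{\mathbb T}$-a.e.) shows that every $f\in P^t(\mu)$ has a nontangential limit $f^*$ $\mu|_{\mathbb T}$-a.e., and $p_n|_{\mathbb T}=p_n^*\to f|_{\mathbb T}$ in $L^t(\mu|_{\mathbb T})$ then gives $f^*=f|_{\mathbb T}$ in $L^t(\mu|_{\mathbb T})$.

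The technical core is the maximal inequality. Fix $\lambda\in\mathbb T\cap\mathrm{spt}(\mu)$ and decompose $\mathbb D$ into Whitney-type dyadic squares accumulating on $\mathbb T$. The abpe hypothesis furnishes a bounded evaluation on each such square $Q$ near $\mathbb T$, and Thomson's scheme constructs, via an iterative ``coloring,'' functions in $P^t(\mu)$ that are uniformly close to $1$ on a large $\mu$-portion of $Q$ and small in $L^t(\mu)$ off $Q$. The obstruction to this construction is measured by the analytic capacities $\gamma(\cdot)$ of the exceptional pieces of the Cauchy transforms $\widehat{g\mu}$ of measures $g\,d\mu$ that annihilate $P^t(\mu)$, and it is precisely the comparability $\gamma\asymp\gamma_+$, Tolsa's semiadditivity $\gamma\big(\bigcup_j E_j\big)\le C\sum_j\gamma(E_j)$, and the Menger-curvature/$L^2$ bounds for Cauchy transforms that make the iteration converge and force the set of $\lambda\in\mathbb T$ where the scheme fails to be $\mu|_{\mathbb T}$-null. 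Summing the local estimates over the Whitney squares inside a fixed Stolz region produces the maximal bound, with constant depending only on $\alpha$ and $t$.

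For (b), let $\mathcal M\ne\{0\}$ be a closed $S_\mu$-invariant subspace. By (a) the restriction $Rf=f|_{\mathbb T}$ defines a bounded map $R:\mathcal M\to L^t(\mu|_{\mathbb T})$ intertwining $S_\mu$ with multiplication by $z$, so $\mathcal N:=\overline{R\mathcal M}$ is invariant under multiplication by $z$ on $L^t(\mu|_{\mathbb T})$. Since multiplication by $z$ is invertible there, the structure theory of its invariant subspaces (Wiener's theorem for the reducing part $\chi_E L^t(\mu|_{\mathbb T})$, and Beurling--Helson--Lowdenslager for the at most one remaining simply-invariant Hardy-type summand, carried by the part of $\mu|_{\mathbb T}$ that is absolutely continuous with respect to $m$) gives $\dim(\mathcal N/z\mathcal N)\le 1$. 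One then transfers this bound to $\mathcal M$: if $\dim(\mathcal M/z\mathcal M)\ge 2$, pick $f_1,f_2\in\mathcal M$ independent modulo $z\mathcal M$; their boundary values are dependent modulo $z\mathcal N$, so $c_1 f_1+c_2 f_2-zg\in\ker R$ for some $(c_1,c_2)\ne(0,0)$ and $g\in\mathcal M$, and using the division property of $P^t(\mu)$ at the points of $\mathbb D=\mathrm{abpe}(P^t(\mu))$ together with the irreducibility of $P^t(\mu)$ (which, since $\mu(\mathbb T)>0$, prevents any part of $\mathbb T$ from detaching) one shows this element must vanish, i.e.\ $c_1 f_1+c_2 f_2\in z\mathcal M$, a contradiction. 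Hence $\dim(\mathcal M/z\mathcal M)=1$.

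The decisive obstacle is the maximal inequality of the second paragraph: pushing Thomson's coloring scheme through in this generality, and above all controlling the exceptional sets by analytic capacity, genuinely requires Tolsa's semiadditivity theorem together with the comparability of $\gamma$ and $\gamma_+$; the classical estimates of Vitushkin and Melnikov are not enough. A secondary subtlety appears in (b) when $\mu|_{\mathbb T}$ is singular with respect to $m$, where the classical Privalov uniqueness theorem is unavailable and the uniqueness needed to handle $\ker R$ must instead be extracted from the Thomson structure of $P^t(\mu)$ supplied by Theorem I.
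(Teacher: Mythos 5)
Your reduction of part (a) to a nontangential maximal inequality plus a telescoping argument is fine in principle, but the inequality itself carries the entire weight of the theorem and your second paragraph does not prove it. ``Run Thomson's coloring scheme over Whitney squares and let $\gamma\asymp\gamma_+$, semiadditivity, and curvature estimates make the iteration converge'' is a description of the program that Aleman, Richter and Sundberg execute over the bulk of \cite{ars02} (their Theorem 3.1, i.e.\ statement (\#) in Section 2 of this paper); you state no lemma, derive no bound on any exceptional set, and never connect the capacity of the set where the scheme fails to $\mu|_{\mathbb T}$-measure. So the central analytic content of (a) is missing, not merely compressed. For comparison, the alternate route in Section 3 of the paper is complete modulo quoted results: a generalized Plemelj formula (Theorem 3.6), proved from Lemma 3.2 and Lemma 3.5 using Tolsa's theorems, is combined with Lemma 3.7 (Lemma B of \cite{ars02}) and Lemma 3.8, and then a duality argument --- take $g\perp P^t(\mu)$ with $|g|>0$ a.e.\ $\mu$, use $f(\lambda)\,\mathcal C(g\mu)(\lambda)=\mathcal C(fg\mu)(\lambda)$ on $\mathbb D$, and compare the boundary behaviour from the inner regions $S_r(\zeta)$ and the outer regions $T_r(\zeta)$ --- to produce the nontangential limits. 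A self-contained proof must either reproduce the ARS scheme in detail or take a route of that kind; it cannot treat the maximal estimate as a black box.

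Part (b) has two genuine gaps. First, the transfer step fails as written: from $\dim(\mathcal N/z\mathcal N)\le 1$ you get $c_1Rf_1+c_2Rf_2\in z\mathcal N=\overline{R(z\mathcal M)}$, which only produces $g_n\in\mathcal M$ with $R(c_1f_1+c_2f_2-zg_n)\to 0$ in $L^t(\mu|_{\mathbb T})$; since $R(z\mathcal M)$ need not be closed and the $g_n$ need not converge in $P^t(\mu)$, there is no single $g\in\mathcal M$ with $c_1f_1+c_2f_2-zg\in\ker R$. What you actually need is injectivity of the induced map $\mathcal M/z\mathcal M\to\mathcal N/z\mathcal N$, and that is essentially the original problem, not a consequence of Wiener or Beurling--Helson--Lowdenslager. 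Second, the treatment of $\ker R$ is not supplied by ``division plus irreducibility'': a nonzero analytic function on $\mathbb D$ can have nontangential limit $0$ a.e.\ on $\mathbb T$ (Privalov--Lusin examples), so vanishing of boundary values on a set of positive $\mu|_{\mathbb T}$-measure forces $f=0$ only after one knows that $f$, or an associated Cauchy transform, lies in a class with a boundary uniqueness theorem. That is precisely what (\#) buys in the paper's alternate proof of part (b) (Theorem 2.4): under the Reduction, annihilator Cauchy transforms lie in $H^1(V)$ for a Jordan region $V$ with rectifiable boundary and $\omega_V(\mathbb T)>0$, Lemma 2.3 identifies their nontangential boundary values, and the contradiction comes from uniqueness for Nevanlinna-class functions on $V$; ARS's own index argument (see Remark 3.9) likewise hinges on a meromorphic function with vanishing nontangential boundary values on a set of positive measure. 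Your closing sentence defers both difficulties to ``the Thomson structure supplied by Theorem I,'' but Theorem I provides neither the exact lifting modulo $z\mathcal M$ nor the uniqueness statement.
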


\section{\textbf{Boundary Values and Invariant Subspaces}}

We begin this section with a discussion of Theorem III, in Section 1. The hypothesis
of this theorem is that $\mu$ is a finite, positive Borel measure, supported in $\overline{\mathbb{D}}$, $\mu(\mathbb{T}) > 0$,
$\mathbb{D} = \mbox{abpe}(P^t(\mu))$ and $P^t(\mu)$ is irreducible.
It is well-known that these conditions imply that $\mu|_{\mathbb{T}}\ll m$. Part of the strategy of Aleman, 
Richter and Sundberg in establishing this result is centered around showing:
\bigskip

\noindent (\#) If $E$ is a Borel subset of $\mathbb{T}$ and $\mu(E) > 0$, then there is a Jordan subregion $\Omega$ 
of $\mathbb{D}$ such that $\partial{\Omega}$ is rectifiable, $m(E\cap\partial{\Omega}) > 0$ and 
\[f\mapsto f|_{\Omega}\] defines a bounded operator from $P^t(\mu)$ to the Hardy space $H^t(\Omega)$. Moreover,
the nontangential boundary values of $f|_{\Omega}$ on $\mathbb{T}\cap \partial{\Omega}$ coincide with $f$ 
(as a function in $P^t(\mu)$) a.e. $m$. 
\bigskip

\noindent Prior to Theorem~III, it was ``conjectured" that $(\#)$ might be a reachable, intermediate 
target that would lead from the general hypothesis here to the desired conclusion. The precedents in the literature include 
\cite{a01}, where everything was couched in terms of harmonic measure. And, indeed, $(\#)$ was established in 
\cite{a02}, in the special case: $t = 2$ and the support of $\mu$ on $\mathbb{T}$ can be expressed as a union of arcs $\{I_n\}_n$, 
where, for each $n$, 
\[\int_{I_n}\log(\mbox{\small{$\left [\frac{d\mu}{dm}\right ]$}}(\zeta))\, dm(\zeta) > -\infty.\]
The proof given in \cite{a02} makes use of Theorem~I, part (a) (above), but does not lend itself for extension to cases beyond this
special assumption on $\mu|_{\mathbb{T}}$. To establish the general result, Aleman, Richter and Sundberg drove deeply
to rework Theorem~I, part (a), using results of X. Tolsa on analytic capacity and thus gained a much better understanding 
of the norms and locations of bounded point evaluations. In the general setting, they obtained (\#) as a consequence of the hypothesis
that $\mu$ is a finite, positive Borel measure supported in $\overline{\mathbb{D}}$ such that $\mathbb{D} = \mbox{abpe}(P^t(\mu))$,
$P^t(\mu)$ is irreducible and $\mu(\mathbb{T}) > 0$; and they did so with precision (see \cite[Theorem 3.1]{ars02}). 
This, by itself, is a very useful result. With $(\#)$ in hand, one can establish 
nontangential limits quite easily and the index one result is also within reach. Actually, Aleman, Richter and Sundberg establish the 
index one result using just the existence of nontangential limits (cf., \cite[Theorem~3.2]{ars02}). Therefore, Theorem~III answers 
affirmatively Question~2.1 in \cite{cy98}, in its full generality. At the end of this section we give an alternate proof of
Theorem~III, part (b), that depends on $(\#)$. And, in Section~3, we provide an alternate route to the existence of nontangential 
limits that bypasses $(\#)$. All of these results seem to point in the direction of an affirmative answer to \cite[Problem~2.2]{cy98} . 
That is, if the hypothesis of Theorem III is in place and $\mathcal{M}$ is a nontrivial, closed invariant subspace for $S_{\mu}$ on $P^t(\mu)$, then there exists $f$ in $P^t(\mu)$ such that $\mathcal{M} = [f]$ -- the closure of $\{fp:\mbox{$p$ is a polynomial}\}$ 
in $L^t(\mu)$. In other words, the shift on every nontrivial, closed invariant subspace of $P^t(\mu)$ is cyclic. But very little 
progress has been made on this at all, not even in the case that $t=2$. We pose this question once again and follow it with 
some observations.

\begin{question}
Suppose that $\mu$ is supported in $\overline{\mathbb{D}}$, $P^2(\mu)$ is irreducible, $\mbox{abpe}(P^2(\mu)) = \mathbb{D}$ and 
$\mu(\mathbb{T}) > 0$. Is every nontrivial, closed, shift invariant subspace of $P^2(\mu)$ cyclic?
\end{question}
 
Now, in a separate paper (cf., \cite{ars01}), A. Aleman, S. Richter and C. Sundberg show that every nontrivial, closed, shift
invariant subspace $\mathcal{M}$ of the classical Bergman space $\mathbb{A}^2$ is generated by the wandering subspace
$\mathcal{M}\ominus z\mathcal{M}$. So, in character, this follows Beurling's Theorem for the classical Hardy space $H^2$. We should remark 
that S.~Shimorin provided a separate proof of this in \cite{s}. H.~Hedenmalm and K.~Zhu have shown that this does not hold in 
certain weighted Bergman spaces (see \cite{HZ}). In what follows, we modify a special example of Hedenmalm and Zhu to give an 
example of a measure $\mu$ that satisfies the hypothesis of the Question~2.1, but for which there exists a nontrivial, closed shift 
invariant subspace $\mathcal{M}$ of $P^2(\mu)$ such that $\mathcal{M}\ominus z\mathcal{M}$ does not generate 
$\mathcal{M}$. It turns out that this $\mathcal{M}$ is, nevertheless, cyclic; though the vector that generates it is 
not orthogonal to $z\mathcal{M}$.

\begin{theorem}
There exists a finite, positive Borel measure 
$\mu$ with support in $\overline{\mathbb{D}}$ such that:
\begin{itemize}
\item[i)] $\mu(\mathbb{T}) > 0$,
\item[ii)] $P^2(\mu)$ is irreducible,
\item[iii)] Every point in $\mathbb{D}$ is an analytic bounded 
point evaluation for $P^2(\mu)$, and
\item[iv)] There is a closed, shift invariant subspace $\mathcal{M}$
in $P^2(\mu)$ such that $\mathcal{M}\ominus z\mathcal{M}$ does
not generate $\mathcal{M}$.
\end{itemize}
\end{theorem}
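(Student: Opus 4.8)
The plan is to engineer $\mu$ as a sum $\mu = w\,dA|_{\mathbb{D}} + d m|_{\mathbb{T}}$, where $w$ is a radial weight on $\mathbb{D}$ chosen so that $P^2(w\,dA)$ behaves like one of the weighted Bergman spaces of Hedenmalm and Zhu in which the wandering subspace fails to generate; the addition of Lebesgue measure on $\mathbb{T}$ is forced on us by clause (i) and is exactly what puts us inside the setting of Question~2.1. First I would recall the Hedenmalm--Zhu example: there is a radial weight for which the invariant subspace $\mathcal{M}$ generated by $z^2$ and $z^3$ (equivalently, functions vanishing to order $\geq 2$ at the origin, intersected with the polynomial closure appropriately) has $\dim(\mathcal{M}\ominus z\mathcal{M}) = 1$ but $\mathcal{M}\ominus z\mathcal{M}$ is spanned by a single function $g$ whose lowest-order term is $z^2$, so $[g] \subsetneq \mathcal{M}$; in fact $z^3 - c\,z g \notin [g]$ for the relevant constant $c$. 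I would take $\mathcal{M}$ to be the closure in $P^2(\mu)$ of $\{p : p \text{ a polynomial}, p(0) = p'(0) = 0\}$ — i.e. $[z^2, z^3]$ — and verify that the same defect persists after we adjoin $dm|_{\mathbb{T}}$.

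The verification breaks into four points matching (i)--(iv). Clause (i) is immediate from the construction. For (iii), I would check that the radial weight $w$ can be chosen to decay slowly enough near $\mathbb{T}$ (e.g. $w$ bounded below on each circle $|z| = r$, or $w$ not decaying faster than a power of $1-|z|$) that point evaluation at each $z_0 \in \mathbb{D}$ is bounded with locally uniform constants; adding $dm|_{\mathbb{T}}$ only increases the norm, so abpe is unaffected and equals $\mathbb{D}$. Clause (ii), irreducibility, I would get from the fact that $w\,dA$ gives full-support area measure on $\mathbb{D}$ so $P^2(\mu)$ contains no nontrivial characteristic function of a subset of $\mathbb{D}$, while the $\mathbb{T}$-part is absolutely continuous and, by Theorem~III(a) applied to $\mu$ (whose hypotheses we have just arranged), the boundary values of $P^2(\mu)$-functions agree with $f|_{\mathbb{T}}$, which rules out a splitting across $\mathbb{T}$ as well. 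The crux is (iv): I must show $\mathcal{M}\ominus z\mathcal{M}$ is one-dimensional, say spanned by $g$, and that $[g]\neq \mathcal{M}$. For the index computation one can invoke the ARS index-one theorem (Theorem~III(b)) to get $\dim(\mathcal{M}/z\mathcal{M}) = 1$, hence $\dim(\mathcal{M}\ominus z\mathcal{M}) = 1$; then $g$ is the (essentially unique) unit vector in $\mathcal{M}$ orthogonal to $z\mathcal{M}$, and its expansion begins at order $2$. To see $[g]\subsetneq\mathcal{M}$ it suffices to exhibit one function in $\mathcal{M}$ not in $[g]$ — the natural candidate is $z^3$ (or $z^2$ itself), and non-membership is detected by the same biorthogonality/reproducing-kernel computation that Hedenmalm--Zhu use, which survives the perturbation by $dm|_{\mathbb{T}}$ because that perturbation changes the inner product only through the boundary integrals of polynomials, a contribution I would bound and show does not destroy the strict inequality.

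The main obstacle I anticipate is precisely this last stability argument: the Hedenmalm--Zhu obstruction is a delicate inequality between norms of specific polynomials in a weighted Bergman space, and one must confirm it is not washed out by the extra mass $\mu(\mathbb{T}) > 0$. The safe route is to choose the radial weight $w$ so that $\int_{\mathbb{D}} w\,dA$ dominates the construction — i.e. take $\mu = w\,dA + \varepsilon\,dm|_{\mathbb{T}}$ with $\varepsilon$ small — and to carry the Hedenmalm--Zhu estimate through with explicit error terms of size $O(\varepsilon)$, so that for $\varepsilon$ small enough $[g]$ still omits $z^3$. One then checks, as the paper's remark anticipates, that $\mathcal{M}$ is after all cyclic — generated not by $g$ but by a vector such as $z^2 + z^3$ whose image under multiplication by polynomials fills out all of $\mathcal{M}$ — though this is a bonus observation and not needed for the four listed conclusions.
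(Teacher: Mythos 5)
There is a genuine gap, and it sits exactly at the heart of the construction, clause (iv). First, the Hedenmalm--Zhu phenomenon is not the one you describe. Their example (the one the paper uses, with weight $6(1-|z|^2)^5\,dA$) concerns the zero-based subspace $\mathcal{M}_a=\{f: f(a)=0\}$ for a suitable point $a\neq 0$: the extremal (wandering) vector is $g_5(z)=1-\bigl(\frac{1-|a|^2}{1-\bar a z}\bigr)^{7}$, and the failure comes from $g_5$ having an \emph{extra} zero in $\mathbb{D}$, so that $[g_5]\subsetneq\mathcal{M}_a$. Your proposed model, a radial weight together with $\mathcal{M}=[z^2,z^3]=\{f: f(0)=f'(0)=0\}$, cannot exhibit the defect at all: for a radial weight (and also after adding $\varepsilon\,dm|_{\mathbb{T}}$, which keeps the monomials orthogonal) one has $\mathcal{M}=\overline{\operatorname{span}}\{z^2,z^3,\dots\}$, $\overline{z\mathcal{M}}=\overline{\operatorname{span}}\{z^3,z^4,\dots\}$, hence $\mathcal{M}\ominus z\mathcal{M}=\mathbb{C}z^2$ and $[z^2]=\mathcal{M}$; the wandering subspace generates and (iv) fails. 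Second, even after replacing your $\mathcal{M}$ by the correct $\mathcal{M}_a$, the step you flag as the ``main obstacle'' is indeed the crux and is not carried out: when you perturb the measure by $\varepsilon\,dm|_{\mathbb{T}}$, the wandering vector itself changes (it solves an extremal problem in the new inner product), so you would have to prove something like locally uniform convergence of the extremal functions as $\varepsilon\to 0$ and then invoke Hurwitz to recover the extra zero; nothing in the proposal does this, and ``the boundary contribution is $O(\varepsilon)$'' is not by itself an argument that the perturbed wandering vector still has an interior zero off $a$.

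The paper's proof avoids any stability analysis by making the added boundary mass \emph{exactly} compatible with the old wandering vector: it takes a smoothly bounded Jordan region $\Omega\subset\mathbb{D}$ with $0\in\Omega$ and $\mathbb{T}\cap\partial\Omega$ a nontrivial arc, on which $g_5$ is bounded below, and sets $d\sigma=|g_5|^{-2}\,d\omega_\Omega$, $\mu=\sigma+A_5$. Then $\int pg_5\overline{g_5}\,d\sigma=p(0)=0$ for polynomials vanishing at $0$, and since $g_5|_\Omega$ is outer this gives $\int f\overline{g_5}\,d\sigma=0$ for all $f\in P^2(\sigma)$ with $f(0)=0$; combined with the Hedenmalm--Zhu orthogonality in $P^2(A_5)$ this shows $g_5\perp z\mathcal{M}$ in $P^2(\mu)$ with no error term. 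The index-one theorem (Theorem III(b), or \cite{a02}) then pins down $\mathcal{M}\ominus z\mathcal{M}=\mathbb{C}g_5$, and the extra zero of $g_5$, together with bounded point evaluations on $\mathbb{D}$, gives $[g_5]\neq\mathcal{M}$ (indeed $\mathcal{M}=[z-a]$). If you want to salvage your $\varepsilon$-perturbation route you must either prove the convergence-of-extremal-functions statement or, better, imitate this exact-orthogonality device; as written, the proposal does not establish (iv).
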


\begin{proof}  We turn to \cite{HZ} and consider the case: $\alpha = 5$.  Following the notation of
this paper of Hedenmalm and Zhu, let

\[dA_5 := 6(1-|z|^2)^5dA(z).\] 

\noindent By \cite[Propositions~2 and 3]{HZ}, this measure $A_5$ has all of the properties that 
we are looking for except (i), as stated above. Our strategy is to add some measure to $A_5$ to 
gain condition (i), while preserving (ii) - (iv). Now, by \cite[Proposition~3]{HZ}, there exists $a$ in 
$\mathbb{D}$ such that 

\[g_5(z) := 1 - \left (\frac{1 - |a|^2}{1 - \bar{a}z}\right )^7\]

\noindent has another zero in $\mathbb{D}$ other than $z = a$; and clearly $z = 0$ is not a zero
of $g_5$. Let $\Omega$ be a smoothly bounded Jordan region in $\mathbb{D}$ such that 
\begin{itemize}
\item $0\in \Omega$,
\item $\mathbb{T}\cap \partial{\Omega}$ is a nontrivial subarc of $\mathbb{T}$,
\item $|g_5|$ is bounded below by a positive constant on $\overline{\Omega}$;
indeed, $g_5|_{\Omega}$ is a ``nice" outer function on $\Omega$. 
\end{itemize}
Let $\omega_{\Omega}$ denote harmonic measure on $\partial{\Omega}$
for evaluation at $z = 0$ and define $\sigma$ on $\partial{\Omega}$ by

\[d\sigma = \frac{1}{|g_5|^2}d\omega_{\Omega}.\]

\noindent Then, for any polynomial $p$ with $p(0) = 0$, we have:

\[\int_{\partial{\Omega}}p(\zeta)g_5(\zeta)\cdot\overline{g_5(\zeta)}\, d\sigma(\zeta) = 0.\]

\noindent Since $g_5|_{\Omega}$ is an outer function in $\Omega$, we find that 
$\{pg_5: \mbox{$p$ is a polynomial and $p(0) = 0$}\}$ is dense in $H_0^2(\Omega) : = \{f\in H^2(\Omega): f(0) = 0\}$, 
and indeed is dense in\linebreak $\{f~\in~P^2(\sigma)~:~f(0)~=~0\}$. From this it follows that 
\ \begin{eqnarray}\label{eq3-1}
\ \int_{\partial{\Omega}}f(\zeta)\cdot\overline{g_5(\zeta)}\, d\sigma(\zeta) = 0,
\ \end{eqnarray}
whenever $f\in P^2(\sigma)$ and $f(0) = 0$.
Define $\mu$ on $\overline{\mathbb{D}}$ by: $\mu = \sigma + A_5$.
By standard arguments, $\mu$ satisfies (i)-(iii), as listed above.
Moreover, since $A_5\leq \mu$ and $\sigma \leq \mu$ it follows that if $f\in P^2(\mu)$, then $f|_{\mathbb{D}}\in P^2(A_5)$
and $f|_{\overline{\Omega}}\in P^2(\sigma)$. Notice that $g_5\in H^{\infty}(\mathbb{D})$ and hence $g_5\in P^2(\mu)$. 
Let $\mathcal{M} = \{f\in P^2(\mu): f(a) = 0\}$. Then, by the \eqref{eq3-1} and \cite[Proposition~2]{HZ}, $g_5\in \mathcal{M}$
and $g_5\perp z\mathcal{M}$. Now, by \cite{a02} or Theorem~III above, $\mathcal{M}\ominus z\mathcal{M}$ 
has dimension one, and so $\mathcal{M}\ominus z\mathcal{M} = \{cg_5: c\in\mathbb{C}\}$.
However, $\mathcal{M}\neq [g_5]$, since $g_5$ has another zero in $\mathbb{D}$ other than $z=a$.
It is a straightforward exercise to show that $\mathcal{M} = [h]$, where $h(z) := z-a$.
\end{proof}

We close this section by developing an alternate proof of Theorem III, part (b). 
Our motivation for including it is that anything in this direction could be useful in establishing Question 2.1, above.
We are assuming that (\#) holds as stated in the beginning of this section. This, coupled with work in \cite{a01}, in particular,
\cite[Theorem 2.4]{a01} -- which easily carries over for $1\leq t < \infty$ -- and \cite[Theorem 3.2]{a01}, allows us to reduce to the case:
\bigskip

\noindent\textbf{Reduction.} There are Jordan subregions $V$ and $W$ of $\mathbb{D}$ such that:
\begin{itemize}
\item $0\in V\subseteq W$ and $\partial{V}$ is rectifiable. 
\item $\mu(W) = 0$ and hence $\mu(V) = 0$.
\item $\omega_V(\mathbb{T}) > 0$ and hence $\omega_W(\mathbb{T}) > 0$; where $\omega_V$ is harmonic measure on 
$\partial{V}$ for evaluation at $0$, and similarly for $\omega_W$. 
\item $g\mapsto g|_W$ defines a bounded operator from $P^t(\mu)$ to $H^t(W)$ -- the Hardy space of $W$ -- and so the same 
holds with $V$ in place of $W$. 
\item If $g\in L^1(\mu)$ and $\int pg\, d\mu = 0$ for all polynomials $p$, then  
$\hat{g}(\zeta) := \int\frac{g(z)}{z-\zeta}\, d\mu(z)$ is in $H^1(V)$.
\item Again, we can express $d\mu|_{\mathbb{T}}$ as $hdm$.
\end{itemize}
\bigskip

We recall that $\hat{g}$, as defined above on $V$, is called the Cauchy transform of the measure $gd\mu$ (on $V$).
In Section 3 we shall have a much more careful discussion of Cauchy transforms and we shall use the alternate 
notation of $\mathcal C(g\mu)$. Our next lemma is quite standard in the literature. We include a proof for the sake of completeness.

\begin{lemma} Assume that $\sigma\in L^1(\mu)$ and that
\[\int p(z)z\sigma(z)\, d\mu(z) = 0,\]
for all polynomials $p$. That is, $z\sigma\perp \mathcal{P}$ (the collection of analytic polynomials). 
Then we have: $(z\sigma\hat{)\,}\in H^1(V)$ and the nontangential boundary values of 
$(z\sigma\hat{)\,}$ on $\mathbb{T}\cap \partial{V}$ are $\sigma h$, a.e. $m$.
\end{lemma}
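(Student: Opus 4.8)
The plan is to read off membership in $H^1(V)$ from the Reduction, and then to identify the boundary values by playing the interior and exterior nontangential limits of the Cauchy transform $\mathcal{C}(z\sigma\mu)(\zeta)=\int\frac{z\sigma(z)}{z-\zeta}\,d\mu(z)$ (which is the $(z\sigma\hat{)\,}$ of the statement) against one another via Plemelj's formula. Since $\mbox{spt}(\mu)\subseteq\overline{\mathbb{D}}$ and $\sigma\in L^1(\mu)$ we have $z\sigma\in L^1(\mu)$, and the hypothesis $z\sigma\perp\mathcal{P}$ is exactly the statement that $\int p\cdot(z\sigma)\,d\mu=0$ for every polynomial $p$; so $g:=z\sigma$ meets the hypothesis of the clause of the Reduction that produces elements of $H^1(V)$, and hence $(z\sigma\hat{)\,}\in H^1(V)$. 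In particular it has nontangential boundary values $m$-a.e.\ on the arc $J:=\mathbb{T}\cap\partial V$ (nondegenerate, since $\omega_V(\mathbb{T})>0$), and at $m$-a.e.\ point of $J$ --- where, away from the endpoints, $\partial V$ coincides with $\mathbb{T}$ --- this $H^1(V)$ boundary value agrees with the nontangential limit of $\mathcal{C}(z\sigma\mu)$ taken from within $\mathbb{D}$.

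Next I would note that $\mathcal{C}(z\sigma\mu)$ vanishes on the unbounded component of $\mathbb{C}\setminus\mbox{spt}(\mu)$: for $|\zeta|$ large, expanding $\frac{1}{z-\zeta}=-\sum_{n\ge 0}z^{n}\zeta^{-n-1}$ and integrating term by term gives $\mathcal{C}(z\sigma\mu)(\zeta)=-\sum_{n\ge 0}\zeta^{-n-1}\int z^{n+1}\sigma\,d\mu$, and each coefficient $\int z^{m}\sigma\,d\mu$ with $m\ge 1$ is zero by $z\sigma\perp\mathcal{P}$ (take $p=z^{m-1}$). As $\mathbb{C}\setminus\overline{\mathbb{D}}$ lies in that unbounded component, the exterior nontangential limit of $\mathcal{C}(z\sigma\mu)$ is $0$ at $m$-a.e.\ point of $\mathbb{T}$.

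Finally I would localize at $J$ and apply Plemelj. Writing $d\mu|_{\mathbb{T}}=h\,dm$ and using $V\subseteq W$ with $\mu(W)=0$, the measure $\mu$ puts no mass in a neighborhood of (the interior of) $J$ except on $J$ itself; so near such points $z\sigma\,d\mu$ agrees with $z\sigma h\,dm$ up to a measure whose Cauchy transform is analytic there, whence the jump of $\mathcal{C}(z\sigma\mu)$ across $J$ equals that of $\mathcal{C}(z\sigma h\,dm)$. Parametrizing $\mathbb{T}$ by $dm=dz/(2\pi i z)$ rewrites the latter as $\frac{1}{2\pi i}\int_{\mathbb{T}}\sigma(z)h(z)(z-\zeta)^{-1}\,dz$, the classical Cauchy integral of $\sigma h\in L^{1}(\mathbb{T},|dz|)$ (finite since $\int_{\mathbb{T}}|\sigma h|\,dm\le\int|\sigma|\,d\mu<\infty$); by Plemelj's formula its interior and exterior nontangential limits exist $m$-a.e.\ and differ by $\sigma h$. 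Combining with the previous paragraph, the exterior limit of $\mathcal{C}(z\sigma\mu)$ is $0$ and its jump across $J$ is $\sigma h$, so the interior limit equals $\sigma h$ $m$-a.e.\ on $J$; by the first paragraph this interior limit is the nontangential boundary value of $(z\sigma\hat{)\,}$ on $\mathbb{T}\cap\partial V$, which is what was claimed.

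As the paper remarks, the result is standard and I do not expect a genuine obstacle; the only points needing care are the bookkeeping: using the geometry of the Reduction ($V\subseteq W$, $\mu(W)=0$) to be sure the part of $\mu$ carried by $\mathbb{D}$ produces no jump across $J$; matching the $H^1(V)$ boundary values with the ``interior'' Plemelj limit; and keeping the normalization straight in $\mathcal{C}(z\sigma h\,dm)=\frac{1}{2\pi i}\int_{\mathbb{T}}\sigma h\,(z-\zeta)^{-1}\,dz$, so that the density reappearing in the jump is exactly $\sigma h$ (the spurious factor $z$ cancels the $1/z$ from $dm$).
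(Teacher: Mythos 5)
Your first step (membership of $(z\sigma\hat{)\,}$ in $H^1(V)$ via the Reduction and \cite[Theorem 3.2]{a01}) and your observation that $\mathcal{C}(z\sigma\mu)$ vanishes identically off $\overline{\mathbb{D}}$ are fine, and they match the paper. The genuine gap is in the localization step. You treat $J:=\mathbb{T}\cap\partial V$ as a nondegenerate arc and claim that, because $V\subseteq W$ and $\mu(W)=0$, the measure $\mu$ puts no mass in a (two-dimensional) neighborhood of the interior of $J$ except on $J$ itself, so that near $J$ the measure $z\sigma\,d\mu$ differs from $z\sigma h\,dm$ by something whose Cauchy transform is analytic across $J$. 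Neither assertion is justified by the Reduction: $\partial V\cap\mathbb{T}$ is only known to have positive harmonic (hence positive $m$) measure, and in the generic situation coming from $(\#)$ it is a totally disconnected set with empty interior in $\mathbb{T}$, so ``the interior of $J$'' may be empty; more importantly, $\mu(W)=0$ says nothing about $\mu$ on $\mathbb{D}\setminus W$, and $\mu|_{\mathbb{D}}$ can (and typically does) accumulate at $m$-a.e.\ point of $\partial V\cap\mathbb{T}$ through the complementary region $\mathbb{D}\setminus W$. Consequently $\mathcal{C}(z\sigma\,d\mu|_{\mathbb{D}})$ need not extend analytically across any neighborhood of such points, and the reduction to the classical Plemelj jump for the circle measure $z\sigma h\,dm$ collapses. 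This contribution of $\mu|_{\mathbb{D}}$ is exactly the hard part of the lemma, not bookkeeping.

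The paper's proof shows how that term is actually controlled: using $z\sigma\perp\mathcal{P}$ one subtracts the reflected pole $1/(z-\tfrac{1}{r\overline{\zeta}})$ and rewrites $(z\sigma\hat{)\,}(r\zeta)$ as a Poisson-type integral of $\sigma\,d\mu$ plus a remainder; the Kriete--Trent lemma \cite[Lemma 1.1]{ars02} forces the Poisson integral of $|\sigma|\,d\mu|_{\mathbb{D}}$ to tend to $0$ radially for $m$-a.e.\ $\zeta$, while the part on $\mathbb{T}$ converges to $\sigma(\zeta)h(\zeta)$; and the remainder is handled using the fact that the rectifiable curve $\partial V$ is tangent to $\mathbb{T}$ at $m$-a.e.\ point of $\partial V\cap\mathbb{T}$, together with $\mu(W)=0$, to bound the extra kernel factor on $\mathbb{D}\setminus W$. (Note also that identifying the $H^1(V)$ boundary values with limits taken inside $\mathbb{D}$ uses this a.e.\ tangency, not the picture of $J$ as an arc with endpoints.) If you want a jump-formula route instead, you would need something like the paper's generalized Plemelj formula (Theorem 3.6), which handles measures with a part singular to $m$ at the price of capacity-small exceptional sets; the classical jump across $\mathbb{T}$ alone does not suffice.
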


\begin{proof} It is a consequence of \cite[Theorem 3.2]{a01} that $(z\sigma\hat{)\,}\in H^1(V)$ -- see our
``Reduction" assumption here. So we really only need to establish the nontangential 
boundary values assertion of this lemma. Now, since $\partial{V}$ is rectifiable, $\partial{V}$ is tangent to $\mathbb{T}$
a.e. $[m]$ on $\mathbb{T}\cap \partial{V}$. Let $\Lambda = \{\zeta\in\mathbb{T}: \mbox{$\partial{V}$ is 
tangent to $\mathbb{T}$ at $\zeta$}\}$. Then, for any $\zeta$ in $\Lambda$, there exists $\lambda$, 
$0 < \lambda < 1$, such that $\{r\zeta: \lambda\leq r < 1\}\subseteq V$. Since $\int p(z)z\sigma(z)\, d\mu(z) = 0$
for all polynomials $p$, if $\lambda \leq r < 1$, then we have:
\begin{eqnarray*}
(z\sigma\hat{)\,}(r\zeta) &=& \int\frac{z\sigma(z)}{z - r\zeta}\, d\mu(z)\\
&=&  \int\left (\frac{1}{z - r\zeta} - \frac{1}{z - \frac{1}{r\overline{\zeta}}}\right )z\sigma(z)\, d\mu(z)\\
&=& \int\left (\frac{1}{z - r\zeta} + \frac{r\overline{\zeta}}{1- r\overline{\zeta}z}\right )z\sigma(z)\, d\mu(z)\\
&=& \int \frac{1- r^2}{(z - r\zeta)(1- r\overline{\zeta}z)}z\sigma(z)\, d\mu(z)\\
&=& \int \frac{1- r^2}{|1- r\overline{\zeta}z|^2}\left (\frac{1 - r\zeta\overline{z}}{z - r\zeta}\right )z\sigma(z)\, d\mu(z)\\
&=& \int \frac{1- r^2}{|1- r\overline{\zeta}z|^2}\left (\frac{z - r\zeta\ |z|^2}{z - r\zeta}\right )\sigma(z)\, d\mu(z)\\
&=& \int \frac{1- r^2}{|1- r\overline{\zeta}z|^2}\sigma(z)\, d\mu(z) + \\
&&\hspace*{2in}\int \frac{1- r^2}{|1- r\overline{\zeta}z|^2}\left (\frac{r\zeta (1 - |z|^2)}{z - r\zeta}\right )\sigma(z)\, d\mu(z)
\end{eqnarray*}
Now, 
\[\int \frac{1- r^2}{|1- r\overline{\zeta}z|^2}\sigma(z)\, d\mu(z) = \int_{\mathbb{D}} \frac{1- r^2}{|1- r\overline{\zeta}z|^2}\sigma(z)\, d\mu(z) 
+ \int_{\mathbb{T}} \frac{1- r^2}{|1- r\overline{\zeta}z|^2}\sigma(z)\, h(z)dm(z).\]
It is well-known that 
\[\int_{\mathbb{T}} \frac{1- r^2}{|1- r\overline{\zeta}z|^2}\sigma(z)\, h(z)dm(z)\longrightarrow \sigma(\zeta)h(\zeta),\]
as $r\rightarrow 1^-$, for $m$ - a.a. $\zeta$ in $\mathbb{T}$. And, by a result of T. Kriete and T. Trent (cf., \cite[Lemma~1.1]{ars02}), 
\[\int_{\mathbb{D}} \frac{1- r^2}{|1- r\overline{\zeta}z|^2}\sigma(z)\, d\mu(z)\longrightarrow 0,\]
as $r\rightarrow 1^-$, for $m$ - a.a. $\zeta$ in $\mathbb{T}$. Therefore,  
\[\int \frac{1- r^2}{|1- r\overline{\zeta}z|^2}\sigma(z)\, d\mu(z) \longrightarrow \sigma(\zeta)h(\zeta),\]
as $r\rightarrow 1^-$, for $m$ - a.a. $\zeta$. Furthermore, 
\[\int \frac{1- r^2}{|1- r\overline{\zeta}z|^2}\left (\frac{r\zeta (1 - |z|^2)}{z - r\zeta}\right )\sigma(z)\, d\mu(z) = 
\int_{\mathbb{D}} \frac{1- r^2}{|1- r\overline{\zeta}z|^2}\left (\frac{r\zeta (1 - |z|^2)}{z - r\zeta}\right )\sigma(z)\, d\mu(z).\]
Applying \cite[Lemma 1.1]{ars02} once again, we have: 
\[\int_{\mathbb{D}} \frac{1- r^2}{|1- r\overline{\zeta}z|^2}|\sigma(z)|\, d\mu(z)\longrightarrow 0,\]
as $r\rightarrow 1^-$, for $m$ a.a. $\zeta$ in $\mathbb{T}$. If $\zeta$ is a point of tangency of $\partial{V}$
with $\mathbb{T}$, then there is a positive constant $M$ (depending on $\zeta$) such that, for all $r$ sufficiently near $1$, 
\[\left |\frac{r\zeta (1 - |z|^2)}{z - r\zeta}\right |\leq M,\]
for all $z\in \mathbb{D}\setminus{W}$. It now follows that 
\[\int_{\mathbb{D}} \frac{1- r^2}{|1- r\overline{\zeta}z|^2}\left (\frac{r\zeta (1 - |z|^2)}{z - r\zeta}\right )\sigma(z)\, d\mu(z)\longrightarrow 0,\]
as $r\rightarrow 1^-$, for $m$ - a.a. points in $\Lambda$. Putting all of this together, we have:
\[(z\sigma\hat{)\,}(r\zeta) \longrightarrow \sigma(\zeta)h(\zeta),\]
as $r\rightarrow 1^-$, for $m$ - a.a. points $\zeta$ in $\Lambda$; and our proof is complete.
\end{proof}

\noindent We now give an alternate proof of the index one result: Theorem III, part (b). Since our ``Reduction"
is still in place, we are assuming (\#) here, as given above. This proof has things in common with the 
proofs of \cite[Theorem 1]{ot} and \cite[Theorem 3.2]{ars02}, though it is different in some respects from either.
\bigskip

\begin{theorem} If $\mathcal{M}$ is a closed subspace of $P^t(\mu)$ that is shift invariant, 
then dim$(\mathcal{M}/z\mathcal{M}) = 1$.
\end{theorem}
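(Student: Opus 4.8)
The plan is to argue by contradiction: from a hypothetical pair of elements of $\mathcal{M}$ that are linearly independent modulo $z\mathcal{M}$ I will manufacture a single meromorphic function on the region $V$ of the \textbf{Reduction} --- so that $0\in V$, $\mu(V)=0$, $\partial V$ is rectifiable, $\omega_V(\mathbb T)>0$, and $g\mapsto g|_V$ carries $P^t(\mu)$ boundedly into $H^t(V)$ --- and this function will be forced to have both a pole and a zero at the origin. Two further facts, standard under the present hypotheses, I use freely: the shift $S_\mu$ is bounded below on $P^t(\mu)$, so $z\mathcal{M}$ is closed; and $g\mapsto g|_V$ is injective, since a function in $P^t(\mu)$ that vanishes on $V$ vanishes on $\mathbb D$ and hence $\mu$-a.e., by the nontangential-limit statement. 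As a by-product $z\mathcal{M}\subsetneq\mathcal{M}$ whenever $\mathcal{M}\ne\{0\}$ --- otherwise every $g\in\mathcal{M}$ would lie in $z^n\mathcal{M}$ for all $n$ and hence vanish to infinite order at $0$ --- so $\dim(\mathcal{M}/z\mathcal{M})\ge 1$, and the task is to rule out $\dim(\mathcal{M}/z\mathcal{M})\ge 2$.

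So assume $f_1,f_2\in\mathcal{M}$ are linearly independent modulo $z\mathcal{M}$. First I would use the Hahn--Banach theorem to pick $k_1,k_2$ in the dual $L^{t'}(\mu)$ (with $L^\infty(\mu)$ when $t=1$) that annihilate $z\mathcal{M}$ and satisfy $\int f_i k_j\,d\mu=\delta_{ij}$. Because $\mathcal{M}$ is invariant under multiplication by polynomials, $z(pg)\in z\mathcal{M}$ for every polynomial $p$ and every $g\in\mathcal{M}$, so $zgk_j\perp\mathcal{P}$; applying the Lemma with $\sigma=gk_j\in L^1(\mu)$, the Cauchy transform $u_j^{(g)}:=(zgk_j\hat{)\,}$ lies in $H^1(V)$ and has nontangential boundary values $gk_j h$ on $\mathbb T\cap\partial V$ --- a set of positive $m$-measure, since $\omega_V(\mathbb T)>0$ and $\partial V$ is rectifiable. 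The argument then turns on two observations. Evaluating the Cauchy transform at $0\in V$ (legitimate since $\mu(V)=0$) gives $u_j^{(g)}(0)=\int gk_j\,d\mu$, so that $u_1^{(f_1)}(0)=u_2^{(f_2)}(0)=1$ while $u_2^{(f_1)}(0)=u_1^{(f_2)}(0)=0$. And for any $g_1,g_2\in\mathcal{M}$, the functions $u_j^{(g_1)}\,g_2|_V$ and $u_j^{(g_2)}\,g_1|_V$ both belong to the Nevanlinna class $N(V)$ and, since $(\#)$ identifies the boundary values of $g_i|_V$ on $\mathbb T\cap\partial V$ with $g_i$, have the same boundary values $g_1 g_2 k_j h$ there; the uniqueness theorem for $N(V)$ then gives
\[
u_j^{(g_1)}\,g_2|_V=u_j^{(g_2)}\,g_1|_V\qquad(j=1,2;\ g_1,g_2\in\mathcal{M}).
\]

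From this identity none of the four functions $u_j^{(f_i)}$ can vanish identically: were, say, $u_2^{(f_1)}\equiv 0$, then $u_2^{(f_2)}\,f_1|_V\equiv 0$ would force $u_2^{(f_2)}\equiv 0$, contradicting $u_2^{(f_2)}(0)=1$; the other three cases are the same. Hence $G:=u_1^{(f_1)}/u_2^{(f_1)}$ is a well-defined, not-identically-zero meromorphic function on $V$. On the one hand, $\mathrm{ord}_0\,G=\mathrm{ord}_0\,u_1^{(f_1)}-\mathrm{ord}_0\,u_2^{(f_1)}\le 0-1=-1$, since $u_1^{(f_1)}(0)\ne 0$ while $u_2^{(f_1)}$ vanishes at $0$. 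On the other hand, dividing the two instances $u_1^{(f_1)}f_2|_V=u_1^{(f_2)}f_1|_V$ and $u_2^{(f_1)}f_2|_V=u_2^{(f_2)}f_1|_V$ shows $G=u_1^{(f_2)}/u_2^{(f_2)}$, whence $\mathrm{ord}_0\,G=\mathrm{ord}_0\,u_1^{(f_2)}-0\ge 1$. These two conclusions are incompatible, and that contradiction finishes the proof.

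The step I expect to be the main obstacle, and the one to write out with care, is the passage from equality of boundary values to equality on $V$ in the displayed identity. It rests on the Lemma's precise description of the boundary values of the Cauchy transforms $u_j^{(g)}$, on the content of $(\#)$ that $g|_V$ reproduces $g$ on $\mathbb T\cap\partial V$, on the positivity of the Lebesgue measure of $\mathbb T\cap\partial V$ (from $\omega_V(\mathbb T)>0$ and the rectifiability of $\partial V$), and on the Nevanlinna-class uniqueness theorem for the Jordan domain $V$; it also requires enough bookkeeping to keep the relevant Cauchy transforms from degenerating to $0$, which is precisely where the normalization $\int f_i k_j\,d\mu=\delta_{ij}$ earns its keep. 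The remaining ingredients --- the duality set-up and the order count at the origin --- are then routine.
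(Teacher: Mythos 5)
Your proposal is correct and follows essentially the paper's own route: the same Reduction, the same Lemma~2.3 giving that the Cauchy transforms $(z\sigma\hat{)\,}$ lie in $H^1(V)$ with nontangential boundary values $\sigma h$ on $\mathbb{T}\cap\partial V$, and the same Nevanlinna-class uniqueness on $V$ applied to products of these transforms with restrictions of elements of $\mathcal{M}$, with the contradiction extracted at the origin. The only substantive difference is the endgame bookkeeping: by keeping two annihilators of $z\mathcal{M}$ with the biorthogonal normalization $\int f_i k_j\,d\mu=\delta_{ij}$, you get $u_j^{(f_i)}(0)=\delta_{ij}$ and the cross identity $u_1^{(f_1)}u_2^{(f_2)}=u_1^{(f_2)}u_2^{(f_1)}$ already yields $1=0$ at $\lambda=0$ (so the meromorphic quotient $G$ is not even needed), which sidesteps the paper's steps of forming a single $\beta=\beta_1+\beta_2$, invoking the ARS reduction to arrange $f(0)\neq 0$, and exploiting the freedom in the constants $c_1,c_2$ --- at the modest price of the nonvanishing checks ($f_i|_V\not\equiv 0$, via part (a)) that let you cancel the restriction factors.
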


\begin{proof}
We argue indirectly and suppose that there is a closed, shift invariant subspace $\mathcal{M}$ of $P^t(\mu)$ such that 
$\mbox{dim}(\mathcal{M}/z\mathcal{M}) \geq 2$. Then, we can find $f$ and $g$ in $\mathcal{M}\setminus z\mathcal{M}$ such 
that $f\not\in\{cg: c\in\mathbb{C}\} + z\mathcal{M}$ and $g\not\in\{cf: c\in\mathbb{C}\} + z\mathcal{M}$.
By the first paragraph of the proof of Theorem 3.2 in \cite{ars02}, we may assume that $f(0) \neq 0$. Now, since 
$f\not\in\{cg: c\in\mathbb{C}\} + z\mathcal{M}$ and $g\not\in\{cf: c\in\mathbb{C}\} + z\mathcal{M}$, 
there exists $\beta_1$ and $\beta_2$  in $L^s(\mu)$ ($\frac{1}{s} + \frac{1}{t} = 1$) such that:
\begin{itemize}
\item $\beta_1\perp \{cg: c\in\mathbb{C}\} + z\mathcal{M}$ (i.e., $\int q(z)\beta_1(z)\, d\mu(z) = 0$ for all $q$ in 
$\{cg: c\in\mathbb{C}\} + z\mathcal{M}$), yet $\beta_1\not\perp f$, and 
\item $\beta_2\perp \{cf: c\in\mathbb{C}\} + z\mathcal{M}$, yet $\beta_2\not\perp g$. 
\end{itemize}
Let $c_1 = \int f\beta_1\, d\mu$ and let $c_2 = \int g\beta_2\, d\mu$. Notice that we can choose $c_1$ and $c_2$ to 
be any nonzero constants we wish. Let $\beta = \beta_1 + \beta_2$.
With $\beta$ as defined above:
\begin{itemize}
\item $zf\beta\perp \mathcal{P}$ (the collection of analytic polynomials) and $zg\beta\perp \mathcal{P}$, 
\item $c_1 = \int f\beta\, d\mu = (zf\beta\hat{)\,}(0)$, and
\item $c_2 = \int g\beta \, d\mu = (zg\beta\hat{)\,}(0)$.
\end{itemize}

\noindent Define $\varphi$ and $\psi$ on $V$ by:
\begin{itemize}
\item $\varphi(\zeta) = f(\zeta)(zg\beta\hat{)\,}(\zeta)$, and
\item $\psi(\zeta) = g(\zeta)(zf\beta\hat{)\,}(\zeta)$.
\end{itemize}

\noindent Now, by our ``Reduction" and Lemma 2.3, $\varphi$ and $\psi$ are both in $\mathcal{N}(V)$ (the Nevanlinna class of $V$) 
and they have the same nontangential boundary values a.e. $\omega_{V}$ on $\mathbb{T}\cap \partial{V}$ (which has positive 
$\omega_V$-measure). Moreover, $\varphi(0) = c_1f(0)$, and $\psi(0) = c_2g(0)$. Since $f(0)\neq 0$ and our choice of $c_1$ and $c_2$ 
among the nonzero constants can be specified without restraint, we can force: $\varphi(0)\neq \psi(0)$. So, we can force: 
$\varphi - \psi\in\mathcal{N}(V)$, $\varphi - \psi$ has zero nontangential boundary values a.e. $\omega_{V}$ on 
$\mathbb{T}\cap \partial{V}$ (which has positive $\omega_V$-measure), yet $(\varphi - \psi)(0) \neq 0$. But, this cannot happen for 
Nevanlinna class functions and so we have a contradiction.
\end{proof}
\vspace*{.5in}

\section{\textbf{Boundary Values, Another Way}}

J. Thomson's proof of the existence of bounded point evaluations for $P^t(\mu)$
uses Davie's deep estimation of analytic capacity, S. Brown's technique, and Vitushkin's localization for 
uniform rational approximation. The proof is excellent but complicated, and it does not really
lend itself to showing the existence of nontangential boundary values in the case 
that $\mbox{spt}(\mu)\subseteq\overline{\mathbb{D}}$, $P^t(\mu)$ is irreducible and $\mu(\mathbb{T}) > 0$.
X. Tolsa's remarkable results on analytic capacity opened the door for a new view of things, through the works
of \cite{ars02}, \cite{ARS10} and \cite{b08}. In this section we present an alternate route to boundary values 
that has extension to the context of mean rational approximation. It also uses the results of X. Tolsa on analytic capacity.
For $\lambda$ in $\mathbb{C}$ and $r > 0$, we let $B(\lambda, r) = \{z\in\mathbb{C}: |z - \lambda | < r\}$.

Let $\nu$ be a finite complex-valued Borel measure that
is compactly supported in $\mathbb {C}$. 
For $\epsilon > 0,$ $\mathcal C_\epsilon(\nu)$ is defined by
\ \begin{eqnarray}
\ \mathcal C_\epsilon(\nu)(z) = \int _{|w-z| > \epsilon}\dfrac{1}{w - z} d\nu (w).
\ \end{eqnarray} 
The (principal value) Cauchy transform
of $\nu$ is defined by
\ \begin{eqnarray}\label{CTDefinition}
\ \mathcal C(\nu)(z) = \lim_{\epsilon \rightarrow 0} \mathcal C_\epsilon(\nu)(z)
\ \end{eqnarray}
for all $z\in\mathbb{C}$ for which the limit exists.
If $\lambda \in \mathbb{C}$ and $\int \frac{d|\nu |}{|z - \lambda |} < \infty$, then 
$\lim_{r\rightarrow 0}\frac{|\nu |(B(\lambda, r))}{r} = 0$ and 
$\lim_{\epsilon \rightarrow 0} \mathcal C_{\epsilon}(\nu)(\lambda )$ exists. Therefore, a standard application of Fubini's
Theorem shows that $\mathcal C(\nu) \in L^s_{\mbox{loc}}(\mathbb{C})$, for $ 0 < s < 2$. In particular, it is
defined for almost all $z$ with respect to area measure on $\mathbb{\mathbb{C}}$, and clearly $\mathcal{C}(\nu)$ is analytic 
in $\mathbb{C}_\infty \setminus\mbox{spt}(\nu)$, where $\mathbb{C}_\infty := \mathbb{C} \cup \{\infty \}.$ In fact, from 
Corollary \ref{ZeroAC} below, we see that \eqref{CTDefinition} is defined for all $z$ except for a set of zero analytic 
capacity. Thoughout this section, the Cauchy transform of a measure always means the principal value of the transform.

The maximal Cauchy transform is defined by
 \[
 \ \mathcal C_*(\nu)(z) = \sup _{\epsilon > 0}| \mathcal C_\epsilon(\nu)(z) |.
 \]

If $K \stackrel{\mbox{\tiny{$\subset\subset$}}}{} \mathbb{C}$ (i.e.,  $K$ is a compact subset of $\mathbb{C}$), then  we
define the analytic capacity of $K$ by
\[
\ \gamma(K) = \sup |f'(\infty)|,
\]
where the supremum is taken over all those functions $f$ that are analytic in $\mathbb C_{\infty} \setminus K$ such that
$|f(z)| \le 1$ for all $z \in \mathbb{C}_\infty \setminus K$; and
$f'(\infty) := \lim _{z \rightarrow \infty} z[f(z) - f(\infty)].$
The analytic capacity of a general subset $E$ of $\mathbb{C}$ is given by: 
\[
\ \gamma (E) = \sup \{\gamma (K) : K\stackrel{\mbox{\tiny{$\subset\subset$}}}{} E\}.
\]
Good sources for basic information about analytic
capacity are Chapter VIII of \cite{gamelin}, Chapter V of \cite{conway}, and \cite{Tol14}.

A related capacity, $\gamma _+,$ is defined for subsets $E$ of $\mathbb{C}$ by:
\[
\ \gamma_+(E) = \sup \|\mu \|,
\]
where the supremum is taken over positive measures $\mu$ with compact support
contained in $E$ for which $\|\mathcal{C}(\mu) \|_{L^\infty (\mathbb{C})} \le 1.$ 
Since $\mathcal C\mu$ is analytic in $\mathbb{C}_\infty \setminus \mbox{spt}(\mu)$ and $(\mathcal{C}(\mu)'(\infty) = \|\mu \|$, 
we have:
\[
\ \gamma _+(E) \le \gamma (E)
\]
for all subsets $E$ of $\mathbb{C}$.  X. Tolsa has established the following astounding results.

\begin{TTolsa} \label{Results of Tolsa}

(1) $\gamma_+$ and $\gamma$ are actually equivalent. 
That is, there is an absolute constant $A_T$ such that 
\begin{eqnarray}\label{GammaEq}
\ \gamma (E) \le A_ T \gamma_+(E)
\end{eqnarray}
for all $E \subset \mathbb{C}.$ 

(2) Semiadditivity of analytic capacity:
\begin{eqnarray}\label{Semiadditive}
\ \gamma \left (\bigcup_{i = 1}^m E_i \right ) \le A_T \sum_{i=1}^m \gamma(E_i)
\end{eqnarray}
where $E_1,E_2,...,E_m \subset \mathbb{C}.$

(3) There is an absolute positive constant $C_T$ such that, for any $a > 0$, we have:  
\begin{eqnarray}\label{WeakOneOne}
\ \gamma(\{\mathcal{C}_*(\nu)  \geq a\}) \le \dfrac{C_T}{a} \|\nu \|.
\end{eqnarray}

(4) Let $\mu$ be a finite, positive Borel measure on $\mathbb{C}$ with linear growth such that the 
Cauchy transform is bounded in $L^2(\mu )$. Then, for any finite complex-valued Borel measure 
$\nu$ with compact support in $\mathbb{C}$, $\lim_{\epsilon\rightarrow 0}\mathcal{C}_{\epsilon}(\nu)(z)$ exists 
for $\mu - a.e.$ $z$ in $\mathbb{C}$.
\end{TTolsa}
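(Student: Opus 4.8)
The final statement is Theorem IV, which collects four results of Tolsa. Since these are cited verbatim from the literature, I would not attempt an independent proof; instead I would record the proof \emph{strategy} as Tolsa carried it out, indicate where each ingredient sits in his corpus, and flag which part is the genuine heart of the matter. The plan is to prove (4) \emph{from} (1)--(3) together with standard Calder\'on--Zygmund machinery, since (4) is the statement actually used later in this section, while treating (1)--(3) as the deep input.

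\textbf{Step 1: The core estimate (1).} The equivalence $\gamma \le A_T\gamma_+$ is Tolsa's theorem and is by far the main obstacle --- everything else is comparatively soft. I would outline its proof as: (i) reduce, via a dualization and the characterization of $\gamma_+$ through positive measures with $L^\infty$-bounded Cauchy transform, to producing, for a compact $K$ with $\gamma(K)$ large, a positive measure $\mu$ supported on $K$ with linear growth ($\mu(B(z,r)) \le Cr$) and comparable mass whose Cauchy transform is bounded in $L^\infty$; (ii) invoke the $T1$/$Tb$-type theorem for the Cauchy kernel on non-doubling measures, so that $L^\infty$-boundedness of $\mathcal C\mu$ follows from bounding $\mathcal C_*\mu$; (iii) control $\mathcal C_*\mu$ using Tolsa's curvature-of-measures identity
\[
c^2(\mu) = \iiint \frac{1}{R(x,y,z)^2}\, d\mu(x)\, d\mu(y)\, d\mu(z),
\]
which ties the $L^2(\mu)$-boundedness of the Cauchy transform to a positive, \emph{geometric} quantity (Menger curvature), and then running the induction/stopping-time scheme that builds $\mu$ with small curvature. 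This curvature identity and the stopping-time construction are the technical nucleus; I would cite \cite{Tol14} for the full argument rather than reproduce it.

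\textbf{Step 2: Semiadditivity (2) from (1).} Once (1) is available, semiadditivity is essentially formal: $\gamma$ is monotone and countably subadditive-\emph{up-to-constant} is not obvious for $\gamma$ itself, but $\gamma_+$ \emph{is} genuinely (finitely, even countably) subadditive because one can add the extremal positive measures $\mu_i$ for the pieces $E_i$ and the Cauchy transform of the sum has $L^\infty$ norm at most the sum of the norms --- so $\gamma_+(\bigcup E_i) \le \sum \gamma_+(E_i)$. Combining with (1) on the left and $\gamma_+ \le \gamma$ on the right gives $\gamma(\bigcup E_i) \le A_T \gamma_+(\bigcup E_i) \le A_T \sum \gamma_+(E_i) \le A_T \sum \gamma(E_i)$, which is (2).

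\textbf{Step 3: Weak $(1,1)$ bound (3), then a.e.\ existence (4).} For (3), I would argue that the set $\{\mathcal C_*(\nu) \ge a\}$ can be covered by a compact piece on which an admissible function for analytic capacity of size $\sim \tfrac1a\|\nu\|$ is manufactured from $\mathcal C\nu$ itself (the truncations $\mathcal C_\epsilon \nu$ are bounded by $a$ off the bad set after renormalization, up to the standard Cotlar-type comparison between $\mathcal C_*\nu$ and $\mathcal C\nu$), giving $\gamma(\{\mathcal C_*(\nu)\ge a\}) \le C_T\|\nu\|/a$; here semiadditivity (2) is what lets one patch local constructions. Finally, for (4): given $\mu$ with linear growth and $\mathcal C$ bounded on $L^2(\mu)$, the principal value $\mathcal C\mu$ exists $\mu$-a.e.\ by the Calder\'on--Zygmund theory for this non-doubling setting; for a general finite complex $\nu$, decompose $\nu = \nu_{\mathrm{ac}} + \nu_{\mathrm{sing}}$ with respect to $\mu$ plus its absolutely continuous/singular parts and use the weak $(1,1)$ estimate (3) as the substitute for an $L^p$ bound to pass to the limit $\epsilon \to 0$ off sets of arbitrarily small analytic capacity, hence off a set of zero analytic capacity (which is $\mu$-null since $\mu$ has linear growth). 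The expected main obstacle throughout remains Step 1: the curvature identity and the stopping-time/induction construction behind $\gamma \approx \gamma_+$; the rest is bookkeeping on top of it, and for the paper's purposes I would simply cite Tolsa's monograph \cite{Tol14} and the original papers.
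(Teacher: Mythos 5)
Your overall stance matches the paper exactly: the paper offers no independent argument for Theorem IV, it simply cites Tolsa (parts (1) and (2) to \cite{Tol03} and Theorem 6.1/Corollary 6.3 of \cite{Tol14}, part (3) to Proposition 2.1 of \cite{Tol02} and Proposition 4.16 of \cite{Tol14}, part (4) to \cite{Tol03} and Theorem 8.1 of \cite{Tol14}), and your decision to defer to those sources for the deep content is the appropriate one.

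However, two of the supplementary derivations you sketch are not correct as stated. In Step 2, your argument for subadditivity of $\gamma_+$ is backwards: if $\mu_i$ is admissible for $E_i$ with $\|\mathcal C(\mu_i)\|_{L^\infty}\le 1$, then $\sum_i\mu_i$ has Cauchy transform bounded only by $m$, so after normalizing you obtain $\gamma_+\bigl(\bigcup_i E_i\bigr)\ge \frac1m\sum_i\gamma_+(E_i)$ --- a lower bound (superadditivity direction), not the upper bound that semiadditivity requires. The genuine difficulty is the converse: an admissible measure for the union does not restrict to admissible measures on the pieces, since $\mathcal C(\mu|_{E_i})$ need not stay bounded. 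Tolsa's proof of semiadditivity of $\gamma_+$ goes instead through the characterization of $\gamma_+$ by positive measures with linear growth and $L^2$-bounded Cauchy transform (equivalently small curvature), which does behave well under restriction; this is exactly the Theorem 4.14 characterization the paper itself invokes in Corollary 3.1. In Step 3/4, the parenthetical claim that a set of zero analytic capacity is $\mu$-null ``since $\mu$ has linear growth'' is false as justified: the natural measure on the corner-quarters Cantor set has linear growth while the set has $\gamma=0$. The correct reason is that $\mu$ also has $L^2(\mu)$-bounded Cauchy transform, so $\mu(E)\lesssim\gamma_+(E)\le\gamma(E)$ by that same characterization. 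Finally, part (4) is not merely bookkeeping on top of (1)--(3): the $\mu$-a.e.\ existence of principal values for an arbitrary finite complex measure $\nu$ is a separate theorem (Theorem 8.1 of \cite{Tol14}) whose proof requires more than the capacitary weak-$(1,1)$ bound, so it should be cited as such rather than presented as a consequence.
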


\begin{proof}
(1) and (2) are from \cite{Tol03} (also see Theorem 6.1 and Corollary 6.3 in \cite{Tol14}).

(3) follows from Proposition 2.1 of \cite{Tol02} (also see \cite{Tol14} Proposition 4.16).

For (4), see \cite{Tol03} (also Theorem 8.1 in \cite{Tol14}).
\end{proof} 

\begin{corollary}\label{ZeroAC}
Suppose that $\nu$ is a finite, complex-valued Borel measure with compact support in $\mathbb{C}$. Then there exists 
$E \subset \mathbb{C}$ with $\gamma(E) = 0$ such that $\lim_{\epsilon \rightarrow 0}\mathcal{C} _{\epsilon}(\nu)(z)$ 
exists for $z\in\mathbb{C}\setminus E$.
\end{corollary}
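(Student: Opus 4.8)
The plan is to deduce Corollary~\ref{ZeroAC} from part~(3) of Theorem~IV (the weak-type $(1,1)$ estimate for the maximal Cauchy transform) together with part~(1) of the same theorem (the comparability of $\gamma$ and $\gamma_+$), using a standard layer-cake argument on the exceptional set. First I would recall the elementary fact, already noted in the text just before the statement, that if $\lambda\in\mathbb{C}$ satisfies $\int\frac{d|\nu|}{|z-\lambda|}<\infty$, then $\lim_{\epsilon\to 0}\mathcal{C}_\epsilon(\nu)(\lambda)$ exists; moreover a Fubini argument shows that the set
\[
  F=\Bigl\{\lambda\in\mathbb{C}:\int\frac{d|\nu|}{|z-\lambda|}=\infty\Bigr\}
\]
has zero area measure. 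So the only obstruction to the limit existing is potential bad behaviour on a set that is area-null but perhaps not of zero analytic capacity, and the work is to rule that out.

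The key step is to quantify oscillation of $\mathcal{C}_\epsilon(\nu)$ as $\epsilon\to 0$ on the complement of $F$. For $\lambda\notin F$ the difference $\mathcal{C}_\epsilon(\nu)(\lambda)-\mathcal{C}_{\epsilon'}(\nu)(\lambda)$ for $0<\epsilon'<\epsilon$ is controlled by $2\,\mathcal{C}_*(\nu_{B(\lambda,\epsilon)})(\lambda)$, where $\nu_{B(\lambda,\epsilon)}$ denotes the restriction of $\nu$ to the ball $B(\lambda,\epsilon)$; hence the ``oscillation at scale $\epsilon$'' is dominated by the maximal Cauchy transform of the truncated measure. The standard device is then: fix $a>0$ and $\delta>0$; by absolute continuity of $|\nu|$ choose a compact set $K_\delta$ so that $\|\nu|_{\mathbb{C}\setminus K_\delta}\|$ is as small as we like, or more precisely cover $\mathrm{spt}(\nu)$ by finitely many small balls on each of which the total mass is tiny, apply the weak-$(1,1)$ bound \eqref{WeakOneOne} to each piece, and sum using semiadditivity \eqref{Semiadditive}. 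This shows that for each $a>0$ the set
\[
  \Lambda_a=\bigl\{\lambda\notin F:\ \limsup_{\epsilon\to 0}\bigl|\mathcal{C}_\epsilon(\nu)(\lambda)-\lim\nolimits_{\epsilon'\to 0}\text{(along a fixed sequence)}\bigr|>a\bigr\}
\]
— more cleanly, the set where the Cauchy difference oscillation exceeds $a$ — satisfies $\gamma(\Lambda_a)\le A_T C_T\,\eta$ for every $\eta>0$, hence $\gamma(\Lambda_a)=0$. Then $E=F\cup\bigcup_{n\ge 1}\Lambda_{1/n}$ has $\gamma(E)=0$ by countable subadditivity of $\gamma$ (which follows from the semiadditivity in \eqref{Semiadditive} together with the definition of $\gamma$ on general sets via inner regularity), and off $E$ the truncated integrals form a Cauchy net, so the limit defining $\mathcal{C}(\nu)(z)$ exists.

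The main obstacle — really the only point requiring care — is handling the localization: the weak-$(1,1)$ estimate \eqref{WeakOneOne} is stated for a fixed measure $\nu$, but what we need is a bound on the set where the \emph{local} maximal transform of the vanishing tails $\nu|_{B(\lambda,\epsilon)}$ is large, uniformly as the ball shrinks and moves. The resolution is the usual one: partition $\mathrm{spt}(\nu)$ into finitely many Borel pieces of small diameter, say of diameter $<\rho$, so that for $\lambda$ in (a neighbourhood of) one piece $P_j$ and $\epsilon<\rho$ the truncated measure $\nu|_{B(\lambda,\epsilon)}$ is supported in a controlled union of the $P_j$'s with total mass $\le \sum_{k}\|\nu|_{P_k}\|$ over the few indices $k$ meeting $B(\lambda,\rho)$; choosing $\rho$ small makes these masses uniformly small by continuity of the measure on nested sets, and then \eqref{WeakOneOne} applied piecewise plus \eqref{Semiadditive} closes the argument. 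One should also invoke countable semiadditivity of $\gamma$, which is itself a known consequence of Tolsa's results (it follows from \eqref{GammaEq} since $\gamma_+$ is countably semiadditive, being defined through an $L^\infty$ constraint on Cauchy transforms); this is standard and I would simply cite it. Everything else is the routine $\epsilon$–$\delta$ bookkeeping of a Cauchy-net argument and I would not spell it out in full.
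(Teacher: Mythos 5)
Your strategy---control the oscillation of $\mathcal{C}_\epsilon(\nu)$ by the maximal transform of small-ball restrictions, then kill the bad set with the weak-$(1,1)$ bound \eqref{WeakOneOne} and semiadditivity \eqref{Semiadditive}---does not close, and the gap sits exactly in the step you flag as ``the only point requiring care.'' After partitioning $\mathrm{spt}(\nu)$ into pieces $P_j$ of diameter $<\rho$, what \eqref{WeakOneOne} gives you on each piece is $\gamma(\Lambda_a\cap P_j)\le \frac{C}{a}\,\|\nu|_{Q_j}\|$, where $Q_j$ is the union of the pieces near $P_j$; summing over $j$ with \eqref{Semiadditive} yields
\[
\gamma(\Lambda_a)\ \lesssim\ \frac{1}{a}\sum_j\|\nu|_{Q_j}\|\ \approx\ \frac{\|\nu\|}{a},
\]
since the $Q_j$ have bounded overlap and together cover $\mathrm{spt}(\nu)$. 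Shrinking $\rho$ makes each individual $\|\nu|_{Q_j}\|$ small (and even that fails if $\nu$ has atoms), but the number of pieces grows in proportion, so the total never drops below $\sim\|\nu\|/a$: you recover only the trivial weak-type bound, never ``$\le A_TC_T\eta$ for every $\eta>0$.'' The standard way to upgrade a maximal inequality to a.e./q.e.\ convergence is convergence on a subclass dense in the relevant norm, and that is precisely what is unavailable here: smooth (or otherwise tractable) measures are not dense in total variation --- a point mass, or any singular measure, stays at distance comparable to its mass from every absolutely continuous measure --- so no decomposition $\nu=\nu_{\mathrm{good}}+\nu_{\mathrm{small}}$ of the required kind exists. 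A second, independent problem: you put $F=\{\lambda:\int\frac{d|\nu|}{|z-\lambda|}=\infty\}$ into the exceptional set knowing only that it has zero area, but zero area does not imply zero analytic capacity; for $\nu$ equal to arc length on a segment, $F$ is the entire segment, which has $\gamma>0$ (and there the principal values do exist a.e., so $F$ must be handled, not discarded).

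The paper's proof avoids all of this by invoking the genuinely deep ingredient, part (4) of Theorem IV, through the dual description of $\gamma_+$: if $F$ is any compact subset of the divergence set, then by \eqref{GammaEq} and Theorem 4.14 of \cite{Tol14} there are an absolute $c>0$ and a positive measure $\mu$ supported in $F$, with linear growth and Cauchy transform bounded on $L^2(\mu)$, such that $c\,\gamma(F)\le\mu(F)$; Theorem IV(4) says $\lim_{\epsilon\to0}\mathcal{C}_\epsilon(\nu)$ exists $\mu$-a.e., while by the choice of $F$ it exists nowhere on $\mathrm{spt}(\mu)\subseteq F$, forcing $\mu(F)=0$, hence $\gamma(F)=0$, and the corollary follows by taking the supremum over compact subsets of the divergence set. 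If you insist on your route, you would in effect have to reprove Theorem IV(4); parts (2) and (3) alone cannot deliver it.
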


\begin{proof}
Let $F\subset \mathbb{C}$ be a compact subset such that $\lim_{\epsilon \rightarrow 0}\mathcal{C}_{\epsilon}(\nu)(z)$ 
does not exist for all $z\in F.$ Then by Tolsa's Theorem (1) and Theorem 4.14 in \cite{Tol14}, there exist an absolute 
constant $c > 0$ and a positive finite Borel measure $\mu$ with support in $F$ such that $\mu$ is linear growth, 
$\mathcal{C}(\mu)$ is bounded in $L^2(\mu ),$ and 
 \[ 
 \ c \gamma(F) \le \mu(F).
 \] 
By Tolsa's Theorem (4), we conclude that $\mu (F) = 0.$ Hence, $\gamma(F) = 0.$ 
\end{proof}

\begin{lemma}\label{CauchyTLemma} 
Let $\nu$ be a finite, complex-valued Borel measure that is compactly
supported in $\mathbb{C}$ and assume that for some $\lambda _0$ in $\mathbb C$ we have:
\begin{itemize}
\item[(a)] $\lim_{r\rightarrow 0} \dfrac{| \nu |(B(\lambda _0, r))}{r } = 0$ and 
\item[(b)] $\lim_{\epsilon \rightarrow 0}\mathcal{C} _{\epsilon}(\nu)(\lambda _0)$ exists.
\end{itemize}
Then, for any $a > 0$, there exists $\delta_a$, $0 < \delta_a < \frac{1}{4}$, such that whenever $0 < \delta < \delta_a$,  
there is a subset $E_{\delta}$ of $\overline{B(\lambda _0, \delta)}$ 
and $\epsilon (\delta ) > 0$ satisfying: 
 \ \begin{eqnarray}\label{CT1}
 \ \lim _{\delta \rightarrow 0} \epsilon(\delta ) = 0,
 \ \end{eqnarray} 
 \ \begin{eqnarray}\label{CT2}
 \ \gamma(E_\delta ) <\epsilon (\delta ) \delta ,
 \ \end{eqnarray}
and for all $\lambda\in B (\lambda _0, \delta ) \setminus E_\delta,$ $\lim_{\epsilon \rightarrow 0}\mathcal{C} _{\epsilon}(\nu)(\lambda )$ 
exists and 
\ \begin{eqnarray}\label{CT3}
\ |\mathcal{C}(\nu)(\lambda) - \mathcal{C}(\nu)(\lambda _0) | \le a.
\ \end{eqnarray}
\end{lemma}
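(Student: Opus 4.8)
The plan is to decompose the measure $\nu$ into a piece that ``sees'' the point $\lambda_0$ well and a piece supported near $\lambda_0$, and then to control each piece separately on the small disk $\overline{B(\lambda_0,\delta)}$. Fix $a>0$. For a scale parameter $\delta$ to be chosen, write $\nu = \nu_\delta^{\mathrm{in}} + \nu_\delta^{\mathrm{out}}$, where $\nu_\delta^{\mathrm{in}} = \nu|_{B(\lambda_0,c\delta)}$ and $\nu_\delta^{\mathrm{out}} = \nu|_{\mathbb{C}\setminus B(\lambda_0,c\delta)}$, with $c$ a fixed constant (say $c=10$) so that for $\lambda \in B(\lambda_0,\delta)$ the inner ball still buffers $\lambda$ away from the support of $\nu_\delta^{\mathrm{out}}$. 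The outer part $\mathcal{C}(\nu_\delta^{\mathrm{out}})$ is analytic on $B(\lambda_0, c\delta)$, hence on $B(\lambda_0,\delta)$; by hypothesis (b) together with the fact that removing the small inner ball perturbs the truncated transform at $\lambda_0$ by an amount going to $0$ (this is where hypothesis (a) enters, via $\int_{|w-\lambda_0|<c\delta}\frac{d|\nu|}{|w-\lambda_0|}\to 0$, which follows from (a) by a dyadic summation), we get $\mathcal{C}(\nu_\delta^{\mathrm{out}})(\lambda_0)\to \mathcal{C}(\nu)(\lambda_0)$ as $\delta\to 0$. Analyticity plus the standard Cauchy-type estimate then bounds the oscillation of $\mathcal{C}(\nu_\delta^{\mathrm{out}})$ on $B(\lambda_0,\delta)$ by $C\|\nu_\delta^{\mathrm{out}}\|\cdot \delta/(c\delta)^2$ — but that is too crude; instead one uses that $\mathcal{C}(\nu_\delta^{\mathrm{out}})'$ is controlled on $B(\lambda_0,2\delta)$ by the same mechanism (difference quotients of truncated transforms), so the oscillation on $B(\lambda_0,\delta)$ tends to $0$ with $\delta$. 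Combining, $|\mathcal{C}(\nu_\delta^{\mathrm{out}})(\lambda) - \mathcal{C}(\nu)(\lambda_0)| \leq a/2$ for all $\lambda\in B(\lambda_0,\delta)$ once $\delta$ is small enough.

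For the inner part, I would invoke Tolsa's weak-type $(1,1)$ bound \eqref{WeakOneOne} applied to $\nu_\delta^{\mathrm{in}}$. Set
\[
E_\delta = \{\lambda \in \overline{B(\lambda_0,\delta)} : \mathcal{C}_*(\nu_\delta^{\mathrm{in}})(\lambda) \geq a/2\}.
\]
Then \eqref{WeakOneOne} gives $\gamma(E_\delta) \leq \frac{2C_T}{a}\|\nu_\delta^{\mathrm{in}}\| = \frac{2C_T}{a}|\nu|(B(\lambda_0,c\delta))$. By hypothesis (a), $|\nu|(B(\lambda_0,c\delta)) = c\delta\cdot o(1)$ as $\delta\to 0$, so $\gamma(E_\delta) \leq \epsilon(\delta)\delta$ where $\epsilon(\delta) := \frac{2C_T c}{a}\cdot \frac{|\nu|(B(\lambda_0,c\delta))}{c\delta} \to 0$; shrinking further we may assume $\epsilon(\delta) < 1$ and $\delta < 1/4$, giving \eqref{CT1} and \eqref{CT2}. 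For $\lambda \in B(\lambda_0,\delta)\setminus E_\delta$ we then have $\mathcal{C}_*(\nu_\delta^{\mathrm{in}})(\lambda) < a/2$, and in particular (combined with Corollary \ref{ZeroAC} applied to $\nu_\delta^{\mathrm{in}}$, after possibly enlarging $E_\delta$ by a set of zero analytic capacity — which does not affect \eqref{CT2}) the principal value $\mathcal{C}(\nu_\delta^{\mathrm{in}})(\lambda)$ exists and satisfies $|\mathcal{C}(\nu_\delta^{\mathrm{in}})(\lambda)| \leq a/2$. Since $\mathcal{C}(\nu)(\lambda) = \mathcal{C}(\nu_\delta^{\mathrm{in}})(\lambda) + \mathcal{C}(\nu_\delta^{\mathrm{out}})(\lambda)$ wherever both are defined, the triangle inequality yields \eqref{CT3} for all $\lambda\in B(\lambda_0,\delta)\setminus E_\delta$, and along the way we have shown the principal value exists there.

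The main obstacle I anticipate is making the outer estimate \emph{uniform over the whole disk} $B(\lambda_0,\delta)$ rather than just at the center: a naive bound on $\mathcal{C}(\nu_\delta^{\mathrm{out}})$ via its derivative picks up a factor $\|\nu_\delta^{\mathrm{out}}\|/(c\delta)$ which need not be small. The fix is to recognize that what controls the oscillation is not the full mass of $\nu_\delta^{\mathrm{out}}$ but the \emph{local} behavior near $\lambda_0$ — specifically one writes, for $\lambda\in B(\lambda_0,\delta)$,
\[
\mathcal{C}(\nu_\delta^{\mathrm{out}})(\lambda) - \mathcal{C}(\nu_\delta^{\mathrm{out}})(\lambda_0) = \int \left(\frac{1}{w-\lambda} - \frac{1}{w-\lambda_0}\right) d\nu_\delta^{\mathrm{out}}(w) = (\lambda-\lambda_0)\int \frac{d\nu_\delta^{\mathrm{out}}(w)}{(w-\lambda)(w-\lambda_0)},
\]
and splits the integral into annuli $c\delta \leq |w-\lambda_0| < 1$ and $|w-\lambda_0|\geq 1$; on the far part the integrand is $O(1)$ so the contribution is $O(\delta\|\nu\|)\to 0$, while on the near annuli one sums the dyadic pieces $|\nu|(\{2^j c\delta \leq |w-\lambda_0| < 2^{j+1}c\delta\})/(2^j c\delta)^2$ against $|\lambda - \lambda_0|\leq \delta$, and hypothesis (a) (in the sharpened form $|\nu|(B(\lambda_0,r)) = r\cdot o(1)$) makes this sum small uniformly. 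This is the technical heart; everything else is bookkeeping with Tolsa's theorems and Corollary \ref{ZeroAC}.
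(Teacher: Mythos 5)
Your argument is, in substance, the paper's own proof: the paper also splits $\nu$ at a radius comparable to $\delta$ (it truncates at $N\delta$ with $N = \frac{30M}{a}+2$, $M=\sup_r |\nu|(B(\lambda_0,r))/r$, instead of your fixed $c=10$), controls the far piece by exactly your dyadic-annuli oscillation estimate, takes the exceptional set to be $\{\mathcal{C}_*(\nu|_{B(\lambda_0,N\delta)})\ge a/2\}\cap\overline{B(\lambda_0,\delta)}$ via Tolsa's weak-type inequality \eqref{WeakOneOne}, and then adjoins the zero-capacity set where the principal value fails (Corollary \ref{ZeroAC}), using semiadditivity \eqref{Semiadditive}. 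So the architecture matches and the proof goes through.

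Two corrections to the write-up. First, your justification that $\mathcal{C}(\nu_\delta^{\mathrm{out}})(\lambda_0)\to\mathcal{C}(\nu)(\lambda_0)$ leans on the claim that (a) forces $\int_{|w-\lambda_0|<c\delta}\frac{d|\nu|(w)}{|w-\lambda_0|}\to 0$; that claim is false -- (a) does not even make this integral finite. For example, let $|\nu|$ put uniform mass $2^{-j}/j$ on the circle $\{|w-\lambda_0|=2^{-j}\}$ for $j\ge 2$: then $|\nu|(B(\lambda_0,r))/r = O(1/\log(1/r))\to 0$ (and (b) holds by symmetry), yet $\int\frac{d|\nu|}{|w-\lambda_0|}=\sum_j \frac1j=\infty$. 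Fortunately the step needs no such input: $\mathcal{C}(\nu_\delta^{\mathrm{out}})(\lambda_0)=\int_{|w-\lambda_0|\ge c\delta}\frac{d\nu(w)}{w-\lambda_0}$ differs from the truncated transform $\mathcal{C}_{c\delta}(\nu)(\lambda_0)$ only by the contribution of the circle $\{|w-\lambda_0|=c\delta\}$, which is $O\bigl(|\nu|(\overline{B(\lambda_0,c\delta)})/(c\delta)\bigr)\to 0$ by (a), and $\mathcal{C}_{c\delta}(\nu)(\lambda_0)\to\mathcal{C}(\nu)(\lambda_0)$ is precisely hypothesis (b). Second, two bits of bookkeeping: since you keep $c$ fixed rather than letting the cutoff grow like $N\delta$, the uniform smallness of the oscillation sum genuinely requires the two-scale split you only sketch (scales below a fixed $\rho_0$, where the density ratio is small, contribute $\lesssim \eta/c$; scales between $\rho_0$ and the diameter of $\mbox{spt}(\nu)$ contribute $O(\delta/\rho_0^2)$) -- the paper avoids this by making $N$ depend on $a$ and $M$ so that a single bound $\frac{4M}{N-1}\le\frac{a}{2}$ suffices; and adjoining the $\gamma$-null set to $E_\delta$ costs a factor $A_T$ via \eqref{Semiadditive}, which should be absorbed into $\epsilon(\delta)$, since without Tolsa's theorem one does not know that adding a set of zero analytic capacity leaves $\gamma$ comparable. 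With those repairs your proposal is correct and coincides with the paper's argument.
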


\begin{proof} Let $M = \sup _{r > 0 }\frac{| \nu |(B(\lambda _0, r))}{r }$. Then, by (a), $M < \infty.$
For $a > 0$, choose $N$ and $\delta_a$, $0 < \delta_a < \frac{1}{4}$, satisfying:
 \[
 \ N = \dfrac{30M}{a} + 2,
 \]
 \[
 \ \dfrac{| \nu |(B(\lambda _0, N\delta))}{\delta } < \dfrac{a}{6},
 \]
and
 \[
 \ |\mathcal{C}_{\delta}(\nu)(\lambda _0) - \mathcal{C}(\nu)(\lambda _0)  | \le \dfrac{a}{6}
 \]
for $0 < \delta < \delta_a.$
We now fix $\delta$, $0 < \delta < \delta_a$, and let $\nu_{N\delta} = \chi _{B (\lambda _0, N\delta )} \nu $, where $\chi _A$ denotes the characteristic function of the set $A$.  
For $0 < \epsilon < \delta $ and $\lambda$ in $B(\lambda _0, \delta )$, we have:
 \[
 \ B(\lambda , \epsilon) \subseteq B(\lambda _0, N\delta )
 \]
and
 \[ 
 \ \begin{aligned}
 \ & |\mathcal{C}_{\epsilon}(\nu)(\lambda) - \mathcal{C}(\nu)(\lambda _0)| \\
 \ \le & |\mathcal{C}_{\epsilon}(\nu)(\lambda) - \mathcal{C}_{\delta}(\nu)(\lambda _0)| + \dfrac{a}{6} \\
 \ \le & |\mathcal{C}_{\epsilon}(\nu - \nu_{N\delta})(\lambda) - \mathcal{C}_{\delta}(\nu - \nu_{N\delta}) (\lambda _0)| + 
|\mathcal{C}_{\epsilon}(\nu_{N\delta})(\lambda)| + |\mathcal{C}_{\delta}(\nu_{N\delta})(\lambda _0)| + \dfrac{a}{6} \\
 \ \le & \left | \int _{\mathbb{C}\setminus B(\lambda _0, N\delta)}\dfrac{(\lambda - \lambda _0)d\nu}{(z - \lambda)(z - \lambda_0)} \right | +  
\mathcal C_*(\nu_{N\delta})(\lambda) + \dfrac{| \nu |(B(\lambda _0, N\delta))}{\delta } + \dfrac{a}{6}\\
 \ \le & \delta \sum_{k = 0}^\infty  \int _{2^{k}N\delta \le |z - \lambda _0| < 2^{k + 1}N\delta}\dfrac{d |\nu |}{|z - \lambda || z - \lambda_0|} +  
\mathcal C_*(\nu_{N\delta})(\lambda) + \dfrac{a}{3}\\
\ \le & \delta \sum_{k = 0}^\infty \dfrac{|\nu |(B (\lambda _0, 2^{k + 1}N\delta ))}{2^{k}(N-1)\delta2^{k}N\delta} +  
\mathcal C_*(\nu_{N\delta})(\lambda) + \dfrac{a}{3}\\
 \ \le & \dfrac{4M}{N-1}  +  \mathcal{C}_*(\nu_{N\delta})(\lambda) + \dfrac{a}{3}\\
 \ \le & \dfrac{a}{2}  +  \mathcal{C}_* (\nu_{N\delta})(\lambda).
 \ \end{aligned}
 \]
Let

 \[
 \ \mathcal{E}_{\delta} = \{\lambda : C_*(\nu _{N\delta})(\lambda ) \ge \frac{a}{2} \} \cap \overline{B (\lambda _0, \delta)}.
 \]
Then
 \[
 \ \{\lambda : |\mathcal C_{\epsilon}(\nu)(\lambda) - \mathcal{C}(\nu)(\lambda _0)| \ge a \} 
 \cap \overline{B (\lambda _0, \delta)} \subset \mathcal{E}_{\delta}.
 \]
From Tolsa's Theorem (3), we get
 \[
 \ \gamma (\mathcal{E}_{\delta}) \le \dfrac{2C_T}{a} \| \nu _{N\delta} \| \le \dfrac{2C_T\delta }{a}\dfrac{| \nu |(B(\lambda _0, N\delta))}{\delta } .
 \]
Let $E$ be the set of $\lambda\in \mathbb{C}$ such that $\mathcal{C}(\nu)(\lambda)$ does not exist. By Corollary \ref{ZeroAC}, 
we see that $\gamma (E) = 0$. Now let $E_{\delta} = \mathcal{E}_{\delta} \cup E$. Applying Tolsa's Theorem (2) we find that
 \[
 \ \gamma(E_{\delta}) \le A_T (\gamma (\mathcal{E}_{\delta})+ \gamma (E)) < \dfrac{2A_TC_T}{a}\dfrac{| \nu |(B(\lambda _0, N\delta))}{\delta }  \delta. 
 \]
Letting
 \[
 \ \epsilon (\delta) = \dfrac{2A_TC_T}{a}\dfrac{| \nu |(B(\lambda _0, N\delta))}{\delta }, 
 \]
we find that \eqref{CT1} and \eqref{CT2} hold.
On $B (\lambda _0, \delta ) \setminus E_\delta$ and for $\epsilon < \delta$, we conclude that
 \[
 \ |\mathcal{C}_{\epsilon}(\nu)(\lambda) - \mathcal{C}(\nu)(\lambda _0)| < a.
 \]
Therefore, \eqref{CT3} follows since
 \[
 \ \lim_{_\epsilon\rightarrow 0} \mathcal{C}_{\epsilon}(\nu)(\lambda) = \mathcal{C}(\nu)(\lambda).
 \]
\end{proof}

\begin{remark}
(1) The above lemma, which is needed in several places of this paper, is a generalization of Lemma 4 in \cite{y17}.

(2) By Lemma 1 of \cite{b67}, we see that $|\mathcal C\nu (\lambda) - \mathcal C\nu (\lambda_0)| \le a$ on a set having 
full area density at $\lambda_0$ whenever $|\lambda - \lambda_0 |$
is sufficiently small. Lemma \ref{CauchyTLemma} shows that this inequality \eqref{CT3} holds for capacitary density which 
is needed (not area density as Browder considers) in our situations.
\end{remark}

\begin{notation}
Part of the statement of Lemma 3.2 that culminates in inequality (3.8) tells us that the sets $E_{\delta}$ can be chosen so that
\[\sup\{|\mathcal{C}(\nu)(\lambda) - \mathcal{C}(\nu)(\lambda _0)|: \lambda\in B (\lambda _0, \delta ) \setminus E_\delta\}\rightarrow 0,\]
as $\delta\rightarrow 0$. From this point on, let us adopt the notation ``$\stackrel{\delta}{\approx}$" to indicate such a phenomenon, which,
in this particular case would read:
\[  \underset{\lambda \in B(\lambda _0, \delta) \setminus E_ {\delta} }{ \mathcal{C}(\nu)(\lambda)} \stackrel{\delta}{\approx} \mathcal{C}(\nu)(\lambda_0).\] 
Furthermore, for $0 < r < 1$ and $\zeta$ in $\mathbb{T}$, let $S_r(\zeta)$ denote the interior of the closed,
convex hull of $\{z: |z|\leq r\}\cup\{\zeta\}$, let $\ell_{\zeta}$ denote the line in $\mathbb{C}$
that is tangent to $\mathbb{T}$ at $\zeta$ and let $T_r(\zeta)$ denote the reflection of $S_r(\zeta)$
through $\ell_{\zeta}$. For $0 < \delta < 1$, let $S_r(\zeta, \delta) = S_r(\zeta)\cap B(\zeta, \delta)$ 
and let $T_r(\zeta, \delta) = T_r(\zeta)\cap B(\zeta, \delta)$ . As before, let $m$ denote normalized 
Lebesgue measure on $\mathbb{T}$.
\end{notation}

\begin{PFormula}\label{PlemeljFormula}
(The classical version for the unit circle $\mathbb{T}$) Let $d\nu = hdm$, where $h\in L^1(m)$. 
Then there exists a subset $Z$ of $\mathbb{T}$, with $m(Z) = 0$, such that for $\zeta$ in $\mathbb{T}\setminus Z$ 
the following hold.

(a) $\mathcal{C}(\nu ) (\zeta) = \lim_{\epsilon\rightarrow 0} \mathcal{C}_ {\epsilon}(\nu)(\zeta)$ exists,

(b)
 \[
 \ \underset{\lambda \in S_r(\zeta)}{\lim _{\lambda\rightarrow \zeta}}\mathcal{C}(\nu )(\lambda) = 
\mathcal{C}(\nu)(\zeta) + \frac{1}{2}h(\zeta)\bar{\zeta},
 \]
and

(c)
\[
 \ \underset{\lambda \in T_r(\zeta)}{\lim _{\lambda\rightarrow \zeta}}\mathcal{C}(\nu )(\lambda) = 
\mathcal{C}(\nu)(\zeta) - \frac{1}{2}h(\zeta)\bar{\zeta},
 \]
\end{PFormula}

Now, $\mbox{abpe}(P^2(m) = \mathbb{D}$. Indeed, $P^2(m)$ is the classical Hardy space $H^2$ and hence every 
function in $P^2(m)$ has a natural analytic continuation to $\mathbb{D}$.  We now apply the classical Plemelj's formula 
(above) to show that every $f$ in $P^2(m)$ has nontangential limits. To this end, suppose $g\perp P^2(m)$. By the proof
of \cite[Chapter VII, Lemma 1.7]{jbc}, we may assume that $|g| > 0$ a.e. $m$. Then, for $\lambda$ in $\mathbb{D}$,
 \[
 \ f(\lambda) \mathcal{C}(gm)(\lambda) = \mathcal{C}(fgm)(\lambda).
 \]
Using Plemelj's formula (b) above, we get
\begin{eqnarray}\label{limitEq}
 \ \underset{\lambda\in S_r(\zeta)}{\lim _{\lambda\rightarrow \zeta}} f(\lambda)\mathcal C(gm)(\lambda)  = 
\mathcal{C}(fgm) (\zeta) + \frac{1}{2}f(\zeta)g(\zeta)\bar{\zeta}, 
 \end{eqnarray} 
for $m$ a.a. (almost all) $\zeta$ in $\mathbb{T}$.
Now, if $\lambda\in\mathbb{C}\setminus\overline{\mathbb{D}}$, then $\frac{1}{z-\lambda},~ \frac{f}{z-\lambda}\in P^2(m)$, and so:
 \[
 \ \mathcal C(gm) (\lambda ) = \mathcal C(fgm) (\lambda ) = 0.
 \]
Therefore, applying Plemelj's formula (c) above, we have
 \[
 \ \ \mathcal{C}(gm)(\zeta) = \frac{1}{2}g(\zeta)\bar{\zeta},
 \]
for $m$ a.a. $\zeta$ in $\mathbb{T}$. And,
 \[
 \ \ \mathcal C(fgm) (\zeta) = \frac{1}{2}f(\zeta) g(\zeta)\bar{\zeta},
 \]
for $m$ a.a. $\zeta$ in $\mathbb{T}$.
Together with \eqref{limitEq}, we find that
 \[
 \ \underset{\lambda \in S_r(\zeta)}{\lim _{\lambda\rightarrow \zeta}} f(\lambda ) g(\zeta)\bar{\zeta}  = f(\zeta) g(\zeta)\overline{\zeta}
 \]
for $m$ a.a. $\zeta$ in $\mathbb{T}$. Since $g(\zeta) \ne 0$, for $m$ a.a. $\zeta$ in $\mathbb{T}$, we conclude that 
\[
 \ \underset{\lambda \in S_r(\zeta)}{\lim _{\lambda\rightarrow \zeta}} f(\lambda)  = f(\zeta),
\]
for $m$ a.a. $\zeta$ in $\mathbb{T}$.

We now develop a generalized Plemelj's Formula that has application to a broad range of $P^t(\mu)$ and $R^t(K, \mu)$
spaces. It is known that Plemelj's Formula holds for some rectifiable curves such as Lipschitz 
graphs (see Theorem 8.8 in \cite{Tol14}). However, for simplicity, our focus is the case of the unit circle $\mathbb{T}$. 
We first need a lemma. 

\begin{lemma}\label{lemmaBasic}
Let $\nu$ be a finite, complex-valued Borel measure with compact support in $\mathbb{C}$. Suppose that $\nu$ is singular to $m$ 
(i.e., $|\nu |\perp m$). Then
 \begin{eqnarray}
 \ m(\{\lambda  :\underset{\delta\rightarrow 0}{\overline\lim} \dfrac{|\nu |(B(\lambda , \delta ))}{\delta} > 0 \}) = 0.
 \end{eqnarray}
\end{lemma}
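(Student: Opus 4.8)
The plan is to establish this as a standard Vitali-type covering argument, showing that a singular measure has vanishing upper linear density with respect to $m$ almost everywhere on $\mathbb{T}$ — and, more relevantly, on all of $\mathbb{C}$, where the conclusion is that the set of $\lambda$ off $\mathbb{T}$ on which the density is positive has $m$-measure zero trivially (since $m$ is supported on $\mathbb{T}$), so the real content is on $\mathbb{T}$. First I would fix $a > 0$ and set $F_a = \{\lambda \in \mathbb{T} : \overline{\lim}_{\delta \to 0} |\nu|(B(\lambda,\delta))/\delta > a\}$; it suffices to show $m(F_a) = 0$ for every $a > 0$, since the set in question is $\bigcup_{n} F_{1/n}$ together with a subset of $\mathbb{C} \setminus \mathbb{T}$ which is $m$-null.

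Next, since $|\nu| \perp m$, I would fix $\eta > 0$ and choose, using outer regularity and mutual singularity, an open set $G \subseteq \mathbb{C}$ with $|\nu|(\mathbb{C} \setminus G)$ small — more precisely, a Borel set $S$ with $m(S) = 0$ and $|\nu|(\mathbb{C}\setminus S) = 0$ is not quite what I want; instead I want the positive-mass part of $|\nu|$ concentrated near an $m$-null set. Concretely: decompose $|\nu| = |\nu|_S$ where $S$ carries all of $|\nu|$ and $m(S) = 0$; cover $S$ by an open set $G$ with $m(G) < \eta$ (possible as $m(S) = 0$), and note $|\nu|(\mathbb{C} \setminus G) = 0$ so $|\nu| = |\nu|\lfloor G$. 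Now for each $\lambda \in F_a$ I would select a ball $B(\lambda, \delta_\lambda)$ with $\delta_\lambda$ arbitrarily small, $B(\lambda,\delta_\lambda) \subseteq G$, and $|\nu|(B(\lambda, \delta_\lambda)) > a\,\delta_\lambda$; shrinking $\delta_\lambda$ is legitimate because $F_a$ is defined via an $\overline{\lim}$. Apply the Vitali covering lemma (the $5r$-covering lemma) to extract a countable disjoint subfamily $\{B(\lambda_j, \delta_j)\}$ with $F_a \subseteq \bigcup_j B(\lambda_j, 5\delta_j)$. Then
\begin{eqnarray*}
m(F_a) \;\leq\; \sum_j m(B(\lambda_j, 5\delta_j)) \;\leq\; C \sum_j \delta_j \;\leq\; \frac{C}{a}\sum_j |\nu|(B(\lambda_j,\delta_j)) \;\leq\; \frac{C}{a}\, |\nu|(G),
\end{eqnarray*}
where $C$ is an absolute constant comparing the $m$-measure of a $5\delta$-ball centered on $\mathbb{T}$ to $\delta$ (an arc of length $\asymp \delta$), and the last inequality uses disjointness of the $B(\lambda_j,\delta_j) \subseteq G$.

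The final step is to let $\eta \to 0$: since $|\nu|(G) \leq |\nu|(\mathbb{C})$ this crude bound alone is not enough, so I must instead use that $|\nu|\lfloor G = |\nu|$ with $m(G) < \eta$ together with absolute continuity of $|\nu|$ on small sets is also not available. The clean fix is to run the argument with $G$ chosen so that $|\nu|(G \setminus S) = 0$ and then observe the disjoint balls lie in $G$, giving $m(F_a) \leq \frac{C}{a}|\nu|(G)$; but since $G$ can be taken with $m(G)$ as small as we like while still containing all the mass, this particular estimate does not shrink. Instead I would localize: replace $|\nu|$ by $|\nu|\lfloor B(\lambda_0, \rho)$ considerations are not needed — the correct standard move is that $F_a \subseteq S' := \{\lambda: \overline{\lim}\, |\nu|(B(\lambda,\delta))/\delta > a\}$ and to cover $S'$, for each $\varepsilon > 0$ pick open $G \supseteq S'$ with $|\nu|(G) < |\nu|(S') + \varepsilon$; but $|\nu|(S')$ need not be zero. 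The genuinely correct argument: the set where the upper density is positive is contained in $\mathrm{supp}(|\nu|)$-related sets; one shows $m\{\overline{\lim} > a\} = 0$ by noting this set is $|\nu|$-measurable and that on it the density bound forces, via the disjoint Vitali family covering it up to $m$-measure $\varepsilon$ each time with total $|\nu|$-mass over the family bounded by the $|\nu|$-mass of an arbitrarily small open neighborhood of that set — and since $|\nu| \perp m$ means $|\nu|$ restricted to any neighborhood of an $m$-positive set can still be large, the resolution is to instead bound $\sum_j \delta_j \le \frac1a |\nu|(\bigcup_j B(\lambda_j,\delta_j))$ and choose the original covering balls inside an open set $G$ with $m(G) < \varepsilon$ \emph{and} then get $m(F_a \setminus \bigcup_j B(\lambda_j,5\delta_j)) = 0$ so $m(F_a) \le m(G)$ directly once we know $F_a \subseteq G$: and indeed $F_a \subseteq S$ up to showing the upper density vanishes off $S$, which holds since for $\lambda \notin S$, $|\nu|(B(\lambda,\delta)) = |\nu|(B(\lambda,\delta) \cap S)$ and $\mathrm{dist}(\lambda, S) > 0$ forces this to be $0$ for small $\delta$. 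Hence $F_a \subseteq S$, $m(S) = 0$, done. The main obstacle, which the above circles around, is precisely pinning down why the upper linear density vanishes $m$-a.e.: the honest statement is that it vanishes off the carrier $S$ of $|\nu|$ for trivial reasons, and $m(S) = 0$; no deep covering lemma is truly needed, but one must be careful that $|\nu| \perp m$ is used in the form "$|\nu|$ and $m$ have disjoint carriers," after which the argument is immediate. I would present it in that streamlined form, invoking the covering lemma only if one wants the sharper a.e.\ statement relative to $|\nu|$ itself.
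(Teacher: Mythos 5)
Your covering setup is the right instinct, but the way you ultimately use the singularity hypothesis is wrong, and the ``streamlined'' conclusion you settle on is false. You end by claiming that $F_a\subseteq S$, where $S$ is a Borel carrier of $|\nu|$ with $m(S)=0$, arguing that for $\lambda\notin S$ one has $\operatorname{dist}(\lambda,S)>0$ and hence $|\nu|(B(\lambda,\delta))=0$ for small $\delta$. That argument requires $S$ to be \emph{closed}, and in general no closed carrier of $m$-measure zero exists: take $\nu=\sum_n 2^{-n}\delta_{q_n}$ with $\{q_n\}$ dense in $\mathbb{T}$; any closed set carrying $\nu$ contains $\mathbb{T}$. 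For this same example the set where the upper density is positive (indeed infinite) is much larger than $S=\{q_n\}$ — it contains a dense $G_\delta$ in $\mathbb{T}$ — so the inclusion $F_a\subseteq S$ is simply false, and the lemma is \emph{not} trivial ``because the carriers are disjoint.'' The covering lemma you dismiss at the end is in fact essential. Your earlier Vitali attempt also does not close as written: you choose the open set $G$ to have small $m$-measure while containing all of the $|\nu|$-mass, which gives the useless bound $m(F_a)\le \frac{C}{a}\|\nu\|$, and moreover you never justify that the balls $B(\lambda,\delta_\lambda)$ with $\lambda\in F_a$ can be taken inside $G$ (there is no reason $F_a\subseteq G$).

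The missing idea — and the one the paper uses — is to make the open set small in $|\nu|$-measure, not in $m$-measure, by first \emph{deleting the carrier}. Since $m(S)=0$, it costs nothing to replace $F_a$ by $F_a\setminus S$, and this set has $|\nu|$-measure zero because $|\nu|(\mathbb{C}\setminus S)=0$. Fix a compact $E\subseteq F_a\setminus S$ and, by outer regularity of the finite measure $|\nu|$, an open $O\supseteq E$ with $|\nu|(O)<\eta$. For each $x\in E$ pick $\delta_x$ with $B(x,\delta_x)\subseteq O$ and $|\nu|(B(x,\delta_x))\ge a\,\delta_x$, pass to a finite subcover by compactness, and apply the $3r$-covering theorem (your $5r$ lemma works just as well) to extract a disjoint subfamily whose dilates still cover $E$. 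Comparing $m$ of the dilated balls with their radii and using disjointness inside $O$ gives $m(E)\le \frac{C}{a}\,|\nu|(O)<\frac{C}{a}\,\eta$, and letting $\eta\to 0$, then using inner regularity of $m$ and taking the union over $a=1/N$, finishes the proof. So the covering machinery in your first half is sound; what is missing is the step of intersecting with the complement of the carrier before choosing the open set, which is exactly where $|\nu|\perp m$ enters.
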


\begin{proof} Since $|\nu |\perp m$, we can find a Borel set $E_0$ such that $|\nu |(\mathbb{C}\setminus E_0 ) = 0$
and $m (E_0) = 0$. Let $N$ be a positive integer and let $E$ be a compact subset of $\{\lambda  :\underset{\delta\rightarrow 0}{\overline\lim} 
\dfrac{|\nu |(B(\lambda , \delta ))}{\delta} > \frac{1}{N} \} \setminus E_0$. Choose $\eta > 0$ and let $O$ be an open set 
containing $E$ such that $|\nu | (O) < \eta $. Now, for any point $x$ in $E$, there exists $ \delta _x > 0$ such that 
$|\nu |(B(x, \delta _x )) \ge \frac{1}{N} \delta _x$ and $B(x, \delta _x ) \subset O$. Since $E$ is a compact subset of 
$\cup_{x\in E} B(x, \delta _x)$, we can choose a finite subset $\{x_i\}_{i=1}^n$ of $E$ so that 
$E\subset \cup_{i=1}^n B(x_i, \delta _{x_i})$. From the $3r$-covering Theorem (see Theorem 2.1 in \cite{Tol14}), 
we can further select a subset $\{x_{i_j}\}_{j=1}^m$ such that 
$\{B(x_{i_j}, \delta _{x_{i_j}})\}_{j=1}^m$ are disjoint and 
 \[
 \ E \subset \cup_{i=1}^n B(x_i, \delta _{x_i}) \subset \cup _{j=1}^m  B(x_{i_j}, 3\delta _{x_{i_j}}).
 \] 
Therefore,
 \[
 \ \begin{aligned}
 \ m(E) \le & 10\sum _{j=1}^m \delta _{x_{i_j}} \\
 \ \le & 10N \sum _{j=1}^m |\nu |(B(x_{i_j}, \delta _{x_{i_j}})) \\ 
 \  = & 10N  |\nu |(\cup _{j=1}^mB(x_{i_j}, \delta _{x_{i_j}})) \\
 \ \le &10N  |\nu | (O) \\
 \ < & 10N \eta .
\ \end{aligned} 
 \] 
Since $\eta > 0$ is arbitrary, we can conclude that $m(E) = 0$, which establishes the result.
\end{proof} 
\bigskip

\begin{theorem}\label{GPTheorem}
(Plemelj's Formula for an arbitrary measure) Let $\nu$ be a finite, complex-valued Borel measure with 
compact support in $\mathbb{C}$. Suppose that $d\nu = hdm + d\sigma$ is the Radon-Nikodym 
decomposition with respect to $m$, where $h\in L^1(m)$ and $\sigma\perp m$. Then there exists a subset $Z$
of $\mathbb{T}$, where $m(Z) = 0$, such that the following hold. For any $\delta$, $0 < \delta < 1$, and any $\zeta$ in 
$\mathbb{T}\setminus Z$, there is a subset $E_{\delta}(\zeta)$ of $\overline{B(\zeta, \delta )}$ such that:

(a$\,'$) $\mathcal C(\nu ) (\zeta) = \lim_{\epsilon\rightarrow 0} \mathcal C_{\epsilon}(\nu)(\zeta)$ exists,

(b$\,'$) $\lim_{\delta\rightarrow 0}\frac{\gamma(E_{\delta}(\zeta ))}{\delta} = 0,$

(c$\,'$) $\mathcal C(\nu )(\lambda ) = \lim_{\epsilon\rightarrow 0}\mathcal{C}_ \epsilon (\nu) (\lambda )$ exists for $\lambda$ in
$B(\zeta, \delta )\setminus E_{\delta} (\zeta),$

(d$\,'$) 
\[
 \ \underset{\lambda \in S_r(\zeta)\setminus E_{\delta}(\zeta)}{\mathcal{C}(\nu )(\lambda )}  \stackrel{\delta}{\approx} 
\mathcal C(\nu ) (\zeta) + \frac{1}{2}h(\zeta) \bar{\zeta}, 
 \]
and

(e$\,'$)
\[
 \ \underset{\lambda \in T_r(\zeta)\setminus E_{\delta}(\zeta)}{\mathcal{C}(\nu )(\lambda )}  \stackrel{\delta}{\approx} 
\mathcal C(\nu ) (\zeta) - \frac{1}{2}h(\zeta) \bar{\zeta}.
 \]

 \end{theorem}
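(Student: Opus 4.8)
The plan is to combine the classical Plemelj formula for $\mathbb{T}$ with the two measure-theoretic inputs established earlier, namely Lemma \ref{CauchyTLemma} (the capacitary version of Browder's density lemma) and Lemma \ref{lemmaBasic} (almost every point of $\mathbb{T}$ is a point of linear density zero for a measure singular to $m$). Write $\nu = hm + \sigma$ with $\sigma \perp m$, so that $\mathcal{C}(\nu) = \mathcal{C}(hm) + \mathcal{C}(\sigma)$ wherever both principal values exist. For the absolutely continuous part $hm$ we already have the full classical Plemelj formula: there is $Z_1 \subset \mathbb{T}$ with $m(Z_1)=0$ so that for $\zeta \notin Z_1$ the value $\mathcal{C}(hm)(\zeta)$ exists and the nontangential limits from $S_r(\zeta)$ and $T_r(\zeta)$ equal $\mathcal{C}(hm)(\zeta) \pm \tfrac12 h(\zeta)\bar\zeta$. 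So the entire content of the theorem is to show that the singular part $\mathcal{C}(\sigma)$ contributes nothing: it has a principal value at $m$-a.e. $\zeta$, and off a small exceptional set $E_\delta(\zeta)$ it is $\stackrel{\delta}{\approx}$-constant near $\zeta$ (with the constant $\mathcal{C}(\sigma)(\zeta)$), so it makes no jump across $\mathbb{T}$.

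First I would assemble the bad set $Z$. Apply Lemma \ref{lemmaBasic} to $\sigma$ to get a null set off which $\overline{\lim}_{\delta\to 0} |\sigma|(B(\zeta,\delta))/\delta = 0$; since the limit superior is zero, hypothesis (a) of Lemma \ref{CauchyTLemma} holds at such $\zeta$ with $\nu$ replaced by $\sigma$. By Corollary \ref{ZeroAC} the principal value $\mathcal{C}(\sigma)(\zeta)$ exists off a set of zero analytic capacity, hence off a set of zero $m$-measure (a set of zero analytic capacity has zero area, so its intersection with $\mathbb{T}$ is... — more carefully: a set of zero analytic capacity is contained in a $G_\delta$ of zero area, and one argues its intersection with $\mathbb{T}$ is $m$-null since $m$-positive subsets of $\mathbb{T}$ have positive analytic capacity). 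Likewise $\mathcal{C}(hm)(\zeta)$ and $\mathcal{C}(\nu)(\zeta)$ exist off $m$-null sets, and $h$ has $m$-a.e. a Lebesgue point. Intersecting finitely many null sets, and also throwing in the classical Plemelj null set for $hm$, gives $Z$ with $m(Z)=0$; for $\zeta \notin Z$, item (a$'$) holds and both $\mathcal{C}(\sigma)(\zeta)$ and $\mathcal{C}(hm)(\zeta)$ exist with sum $\mathcal{C}(\nu)(\zeta)$.

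Next, fix $\zeta \notin Z$ and $\delta \in (0,1)$. Apply Lemma \ref{CauchyTLemma} to the measure $\sigma$ at the point $\lambda_0 = \zeta$: for the given tolerance — I would run it with a sequence $a = a_n \to 0$ and a diagonal choice, or simply invoke the $\stackrel{\delta}{\approx}$ formulation recorded in the Notation block — it produces $E_\delta(\zeta) \subseteq \overline{B(\zeta,\delta)}$ with $\gamma(E_\delta(\zeta)) \le \epsilon(\delta)\delta$ and $\epsilon(\delta) \to 0$, hence (b$'$); off $E_\delta(\zeta)$ the principal value $\mathcal{C}(\sigma)(\lambda)$ exists, and
\[
\underset{\lambda\in B(\zeta,\delta)\setminus E_\delta(\zeta)}{\mathcal{C}(\sigma)(\lambda)} \stackrel{\delta}{\approx} \mathcal{C}(\sigma)(\zeta).
\]
Enlarging $E_\delta(\zeta)$ by the zero-capacity set where $\mathcal{C}(hm)$ fails to have a principal value (using semiadditivity, Tolsa (2), to keep $\gamma(E_\delta(\zeta)) = o(\delta)$) gives (c$'$). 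Now for $\lambda \in S_r(\zeta)\setminus E_\delta(\zeta)$ with $\lambda \to \zeta$, split $\mathcal{C}(\nu)(\lambda) = \mathcal{C}(hm)(\lambda) + \mathcal{C}(\sigma)(\lambda)$; the first term tends to $\mathcal{C}(hm)(\zeta) + \tfrac12 h(\zeta)\bar\zeta$ by the classical Plemelj formula (this limit is along \emph{all} of $S_r(\zeta)$, so removing $E_\delta(\zeta)$ does no harm), while the second is $\stackrel{\delta}{\approx} \mathcal{C}(\sigma)(\zeta)$ by the display above; summing and using $\mathcal{C}(hm)(\zeta) + \mathcal{C}(\sigma)(\zeta) = \mathcal{C}(\nu)(\zeta)$ gives (d$'$). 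The Stolz region $T_r(\zeta)$ is handled identically with the sign reversed, giving (e$'$).

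The main obstacle — and the only place where real work is hidden — is verifying that the $\stackrel{\delta}{\approx}$ statement for $\mathcal{C}(\sigma)$ can be made uniform enough to survive being \emph{added} to the genuine (non-approximate) limit coming from $\mathcal{C}(hm)$. Concretely: Lemma \ref{CauchyTLemma} gives, for each fixed $a$, a threshold $\delta_a$ and sets $E_\delta$; to get a single family $E_\delta(\zeta)$ witnessing $\sup\{|\mathcal{C}(\sigma)(\lambda) - \mathcal{C}(\sigma)(\zeta)| : \lambda \in B(\zeta,\delta)\setminus E_\delta(\zeta)\} \to 0$ as $\delta \to 0$ one must diagonalize the $a_n$ against the $\delta_{a_n}$, choosing $E_\delta(\zeta) := E_\delta$ built with tolerance $a_n$ on the range $\delta_{a_{n+1}} \le \delta < \delta_{a_n}$; one then checks the capacity bound $\gamma(E_\delta(\zeta)) \le \epsilon(\delta)\delta$ still holds with a (possibly larger) $\epsilon(\delta) \to 0$, since on each such range $\epsilon(\delta) = (2A_TC_T/a_n)\,|\sigma|(B(\zeta,N\delta))/\delta$ and the density term $\to 0$ as $\delta \to 0$ by our choice of $\zeta$. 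This is exactly the kind of bookkeeping already packaged in the Notation paragraph following Lemma \ref{CauchyTLemma}, so I would simply cite that $\stackrel{\delta}{\approx}$ convention and not belabor the diagonalization.
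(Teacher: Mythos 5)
Your proposal is correct and follows essentially the same route as the paper: split $\nu = hm + \sigma$, handle $hm$ by the classical Plemelj formula, use Lemma \ref{lemmaBasic} together with Tolsa's results to get the null set $Z$ and existence of the principal values, and apply Lemma \ref{CauchyTLemma} to $\sigma$ at each $\zeta\notin Z$ to produce the exceptional sets $E_\delta(\zeta)$ before adding the two pieces. Your extra remarks (enlarging $E_\delta(\zeta)$ by the zero-capacity set where $\mathcal{C}(hm)$ fails, and the diagonalization over tolerances $a_n$ implicit in the $\stackrel{\delta}{\approx}$ notation) only make explicit details the paper leaves to the reader.
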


\begin{proof}
(Theorem \ref{GPTheorem}) From Tolsa's Theorem (4) and Lemma \ref{lemmaBasic}, we can find 
$Z\subset \partial \mathbb D$ with $m(Z) = 0$ such that, for $\zeta$ in $\mathbb{T}\setminus Z$,
 \[
 \ \underset{\delta\rightarrow 0}{\lim} \dfrac{|\sigma|(B(\zeta, \delta ))}{\delta} = 0,
 \]

 \[
 \ \mathcal{C}(\sigma)(\zeta) = \lim_{\epsilon\rightarrow 0} \mathcal{C}_{\epsilon}(\sigma)(\zeta)
 \]
exists and 
 \[
 \ \mathcal{C}(hm)(\zeta) = \lim_{\epsilon\rightarrow 0} \mathcal{C}_{\epsilon}(hm)(\zeta)
 \]
exists. So (a$\,'$) follows. Now, $\sigma$ satisfies the assumptions of Lemma \ref{CauchyTLemma} for $\zeta$
not in $Z$ and, therefore, by Lemma \ref{CauchyTLemma}, there exists $E_\delta(\zeta)$ such that 
$\lim_{\delta\rightarrow 0}\frac{\gamma(E_{\delta}(\zeta))}{\delta} = 0,$
\[
 \ \mathcal C(\sigma)(\lambda ) = \lim_{\epsilon\rightarrow 0} \mathcal{C}_ {\epsilon}(\sigma) (\lambda )
 \] 
exists for $\lambda$ in $B (\zeta, \delta ) \setminus E_{\delta}(\zeta)$, and 
\[
 \ \underset{\lambda \in B (\zeta, \delta ) \setminus E_{\delta}(\zeta)}{\mathcal{C}(\sigma) (\lambda )} 
\stackrel{\delta}{\approx} \mathcal C(\sigma) (\zeta). 
 \] 
Therefore, together with the classical Plemelj's Formula, we have: (b$\,'$), (c$\, '$), (d$\, '$) and (e$\,'$).
\end{proof}

The following Lemma is from Lemma B in \cite{ars02}. In its statement we let $m_2$ denote area measure
(i.e., two-dimensional Lebesgue measure) on the complex plane $\mathbb{C}$.

\begin{lemma} \label{lemmaARS}
There are absolute constants $\epsilon _1 > 0$ and $C_1 < \infty$ with the
following property. If $R > 0$ and $E \subset  \overline{B(0, R)}$ with 
$\gamma(E) < R\epsilon_1$, then
\[
\ |p(\lambda)| \le \dfrac{C_1}{\pi R^2} \int _{\overline{B(0, R)}\setminus E} |p|\, dm_2
\]
for all $\lambda$ in $B(0, \frac{R}{2})$ and all analytic polynomials $p$.
\end{lemma}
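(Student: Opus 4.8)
The plan is first to scale away $R$: replacing $p$ by $p(R\,\cdot\,)$, $E$ by $R^{-1}E$, and $\lambda$ by $R^{-1}\lambda$ turns the statement into the case $R=1$, since $\gamma$ is homogeneous of degree one and $\int_{\overline{B(0,R)}\setminus E}|p|\,dm_2$ picks up the factor $R^2$. So it suffices to find absolute constants $\epsilon_1,C_1$ so that $E\subseteq\overline{\mathbb D}$ with $\gamma(E)<\epsilon_1$ and $\lambda\in B(0,\tfrac12)$ force $|p(\lambda)|\le\frac{C_1}{\pi}\int_{\overline{\mathbb D}\setminus E}|p|\,dm_2$ for every analytic polynomial $p$. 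The starting point is the mean value identity $p(\lambda)=\frac{4}{\pi}\int_{B(\lambda,1/2)}p\,dm_2$, valid because $B(\lambda,\tfrac12)\subseteq\mathbb D$. Thus it is enough to produce $k\in L^\infty(\overline{\mathbb D}\setminus E)$ with $\|k\|_\infty\le C_1/\pi$ and $\int_{\overline{\mathbb D}\setminus E}q\,k\,dm_2=q(\lambda)$ for all analytic polynomials $q$; equivalently, writing $k_0=\frac{4}{\pi}\chi_{B(\lambda,1/2)}$, one must ``sweep'' the stray piece $k_0\,m_2|_{E}$ of the mean-value measure off of $E$ and onto $\overline{\mathbb D}\setminus E$ without disturbing any moment $\int z^n\,d\mu$, $n\ge 0$ (equivalently, without changing the Cauchy transform off $\overline{\mathbb D}$).

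The mass that must be moved is $\frac{4}{\pi}m_2(B(\lambda,\tfrac12)\cap E)$, and here the hypothesis on $\gamma(E)$ enters through the elementary bound $m_2(E)\le 4\pi\,\gamma(E)^2$ (which follows from $\|\mathcal C(\chi_E m_2)\|_{L^\infty}\le 2\sqrt{\pi\,m_2(E)}$ together with $(\mathcal C(\chi_E m_2))'(\infty)=m_2(E)$), or, more robustly, through Tolsa's weak-type estimate \eqref{WeakOneOne}. The sweeping is then routine \emph{provided there is a full annulus} $\mathcal A=\{a<|z-\lambda|<b\}\subseteq B(\lambda,\tfrac12)$ with $\mathcal A\cap E=\varnothing$ and $b-a$ bounded below by an absolute constant: one matches the geometrically decaying moments $c_n=\frac{4}{\pi}\int_{B(\lambda,a)\cap E}(z-\lambda)^n\,dm_2$ by a density on a slightly smaller annulus via a term-by-term Fourier-in-angle construction whose $L^\infty$ norm is $\lesssim m_2(E)$, and one disposes of the part of $E$ lying outside $B(\lambda,a)$ by an identical outward sweep; adding these corrections to $\frac4\pi\chi_{B(\lambda,1/2)\setminus E}$ produces $k$ with $\|k\|_\infty\le \frac4\pi+C'\epsilon_1^2$.

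The crux — and the only place genuine work is needed — is to guarantee that $E$ cannot obstruct all such annuli: that $E$ neither separates a definite neighborhood of $\lambda$ from $\partial\mathbb D$ nor ``encircles'' $\lambda$ at a definite scale. This is exactly where $\gamma(E)<\epsilon_1$ is used essentially: if $E$ surrounded some $B(\lambda,r)$ then, filling in the bounded complementary components, $\gamma(E)=\gamma(\widehat E)\ge\gamma(\overline{B(\lambda,r)})=r$, so $E$ can only encircle $\lambda$ below scale $\epsilon_1$; moreover each connected piece of the filled set $\widehat E$ has diameter $<4\epsilon_1$, so for $\epsilon_1$ small $\mathbb D\setminus\widehat E$ is still connected and $\widehat E$ has small area. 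Feeding these facts, together with Tolsa's semiadditivity \eqref{Semiadditive}, the comparison $\gamma\asymp\gamma_+$ \eqref{GammaEq}, and \eqref{WeakOneOne}, into the scheme of \cite{ars02} produces the required clean annuli — after deleting from $\mathbb D$ a neighborhood of $E$ of controlled analytic capacity — and closes the estimate with absolute constants, uniformly in $E$ and in $\lambda\in B(0,\tfrac12)$. I expect this last step to be the main obstacle: obtaining from $\gamma(E)<\epsilon_1$ alone an annular region inside $\mathbb D$ that avoids $E$ and onto which the stray mass can be swept with absolute bounds, especially when $E$ hugs $\partial\mathbb D$, where ``there is no room outside.'' (A softer route to the mere existence of absolute $\epsilon_1,C_1$ is a normal-families argument: were the conclusion false one would get $E_n$ with $\gamma(E_n)\to 0$, $\lambda_n\to\lambda_\infty\in\overline{B(0,1/2)}$, and polynomials $p_n$ with $\int_{\overline{\mathbb D}\setminus E_n}|p_n|\,dm_2=1$ but $|p_n(\lambda_n)|\to\infty$, and one would try to use $m_2(E_n)\le 4\pi\gamma(E_n)^2\to 0$ to force $\{p_n\}$ locally bounded on $\mathbb D$ with a constant independent of $\deg p_n$; ruling out concentration of $|p_n|$ on $E_n$ is the same delicate point and again rests on the area bound and the capacitary estimates above.)
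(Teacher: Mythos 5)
There is a genuine gap, and it is exactly at the step you yourself flag as the main obstacle: the reduction to ``a full annulus $\mathcal A\subseteq B(\lambda,\tfrac12)$ with $\mathcal A\cap E=\varnothing$ and width bounded below by an absolute constant'' is not available under the hypothesis $\gamma(E)<\epsilon_1$. Small analytic capacity does not prevent $E$ from meeting every annulus of a fixed width: a finite set (capacity zero) can be $\epsilon$-dense in $\overline{\mathbb D}$, and, more damagingly for your scheme, one can take $E$ to be the union of roughly $n^2$ closed disks of radius $n^{-4}$ centered at the points of a $\tfrac1n$-grid; by semiadditivity \eqref{Semiadditive} this has $\gamma(E)\lesssim n^{-2}$, yet once $\tfrac1n$ is smaller than your target width every annulus around every $\lambda\in B(0,\tfrac12)$ meets $E$ in a set of positive area. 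So no ``clean'' annulus exists even up to $m_2$-null sets, and your sweeping construction (whose $L^\infty$ bound degenerates as the annulus width shrinks) cannot be carried out with absolute constants. The facts you do establish --- the scaling reduction, the dual reformulation via a bounded representing kernel $k$, the area bound $m_2(E)\le 4\pi\gamma(E)^2$, and the observation that $E$ cannot encircle $\lambda$ at scale $\ge\epsilon_1$ (each component of the filled set having diameter $<4\epsilon_1$) --- are correct but do not close this gap, and the normal-families fallback leaves open precisely the same concentration issue, as you acknowledge.

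For comparison: the paper does not prove this lemma at all; it is quoted verbatim as Lemma B of \cite{ars02}, so there is no internal argument to match yours against. In \cite{ars02} the possible concentration of $|p|$ on $E$ is controlled capacitarily --- through Tolsa's comparability \eqref{GammaEq} and the weak-type estimate \eqref{WeakOneOne} for the Cauchy transform, in a Vitushkin-type scheme --- rather than by exhibiting an $E$-free region onto which mass can be swept. If you want to complete a proof along your dual lines, the correction of the functional $q\mapsto\frac4\pi\int_{B(\lambda,1/2)\cap E}q\,dm_2$ has to be built from capacity estimates (e.g.\ via a measure furnished by $\gamma\approx\gamma_+$ on a suitable compact set) and not from a geometric separation of $E$ from an annulus, since the latter simply need not exist.
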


Our next lemma combined with Theorem \ref{GPTheorem} allows to give an alternate proof of 
Theorem III, part (a).

\begin{lemma}\label{NONTL}
Suppose that $\zeta\in\mathbb{T}$, that $0 < r < 1$ and that $f$ is an analytic function in $S_r(\zeta)$.
Furthermore, suppose that there exists $\delta_0$, $0 < \delta_0 < 1$, such that 
whenever $0 < \delta < \delta _0$, there is a subset $E_\delta (\zeta)$ of $B(\zeta, \delta )$ satisfying:

\[\lim_{\delta\rightarrow 0}\frac{\gamma(E_{\delta}(\zeta))}{\delta} = 0,\,\,\, \mbox{and}\,\,\,
 \ \underset{\lambda \in S_r(\zeta, \delta) \setminus E_{\delta}(\zeta)}{f(\lambda )} \stackrel{\delta}{\approx} b.
 \]
Then, for any $\rho$, $0 < \rho < r$
\[
\ \underset{\lambda \in S_{\rho}(\zeta)}{\lim _{\lambda\rightarrow\zeta}} f(\lambda ) = b.
 \]  
\end{lemma}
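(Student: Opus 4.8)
The plan is to show that the nontangential limit of $f$ at $\zeta$ exists and equals $b$ by exploiting the hypothesis that $f$ is uniformly close to $b$ off a set of small capacity, and then converting this ``capacitary closeness'' into a genuine pointwise estimate via Lemma~\ref{lemmaARS}. First I would fix $\rho$ with $0 < \rho < r$ and a point $\lambda \in S_{\rho}(\zeta)$ near $\zeta$; the geometry of the Stolz-type region $S_{\rho}(\zeta)$ means that for each such $\lambda$ there is a disk $B(\lambda, c\,|\lambda - \zeta|)$, with $c$ depending only on $\rho$ and $r$, that is entirely contained in $S_r(\zeta)$ (this is the standard ``cone inside a wider cone'' observation). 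Set $R \approx c\,|\lambda - \zeta|$ and $\delta \approx |\lambda - \zeta|$, so that $B(\lambda, R) \subseteq S_r(\zeta, \delta')$ for an appropriate $\delta'$ comparable to $\delta$.

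Next I would apply Lemma~\ref{lemmaARS}, but to the analytic function $f - b$ on the disk $B(\lambda, R)$ rather than to a polynomial: one first notes that Lemma~\ref{lemmaARS} applies to functions analytic on a neighborhood of $\overline{B(0,R)}$ (polynomials are dense and the estimate passes to the limit, or one invokes it on slightly smaller disks), and $f - b$ is analytic there since $B(\lambda, R) \subseteq S_r(\zeta)$. The hypothesis $\gamma(E_{\delta'}(\zeta))/\delta' \to 0$ guarantees that for $\lambda$ close enough to $\zeta$ we have $\gamma(E_{\delta'}(\zeta) \cap B(\lambda, R)) \le \gamma(E_{\delta'}(\zeta)) < R \epsilon_1$, so the exceptional-set hypothesis of Lemma~\ref{lemmaARS} is met. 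Then
\[
|f(\lambda) - b| \le \frac{C_1}{\pi R^2}\int_{\overline{B(\lambda, R)} \setminus E_{\delta'}(\zeta)} |f - b|\, dm_2
\le C_1 \sup\{|f(w) - b| : w \in S_r(\zeta, \delta') \setminus E_{\delta'}(\zeta)\},
\]
and the right-hand side tends to $0$ as $\lambda \to \zeta$ (equivalently as $\delta' \to 0$) precisely because $f \stackrel{\delta}{\approx} b$ off $E_{\delta'}(\zeta)$ in the sense of the Notation preceding Plemelj's Formula. This gives $\lim_{\lambda \to \zeta,\, \lambda \in S_{\rho}(\zeta)} f(\lambda) = b$.

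The main obstacle I anticipate is purely geometric bookkeeping: one must choose the radius $R = R(\lambda)$ and the scale $\delta' = \delta'(\lambda)$ so that simultaneously (i) $B(\lambda, R) \subseteq S_r(\zeta)$, (ii) $B(\lambda, R) \subseteq B(\zeta, \delta')$ so that $E_{\delta'}(\zeta)$ is the relevant exceptional set, (iii) $R$ is comparable to $\delta'$ so that $\gamma(E_{\delta'}(\zeta)) < R\epsilon_1$ follows from $\gamma(E_{\delta'}(\zeta))/\delta' \to 0$, and (iv) $\lambda \in B(\lambda, R/2)$ trivially but also the ``$S_r(\zeta, \delta')$'' in the sup is the correct truncated region. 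All four are arranged by taking $R$ a fixed small multiple of $|\lambda - \zeta|$ dictated by the aperture gap between $S_\rho$ and $S_r$, and $\delta' = 2|\lambda - \zeta| + R$ or similar; once the constants are pinned down the rest is routine. A minor secondary point is justifying the use of Lemma~\ref{lemmaARS} for general analytic $f$ rather than polynomials, which is handled by a standard approximation/exhaustion argument on slightly smaller concentric disks.
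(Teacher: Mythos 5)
Your proposal is correct and takes essentially the same route as the paper's proof: place a disk centered at $\lambda$ of radius a fixed multiple (determined by the aperture gap between $S_{\rho}(\zeta)$ and $S_r(\zeta)$) of $\delta\approx|\lambda-\zeta|$ inside $S_r(\zeta,\delta)$, verify $\gamma(E_{\delta}(\zeta))<\epsilon_1\epsilon_2\delta$ for small $\delta$ from the capacitary density hypothesis, and apply Lemma \ref{lemmaARS} to $f-b$ to get $|f(\lambda)-b|\le C_1\sup_{z\in S_r(\zeta,\delta)\setminus E_{\delta}(\zeta)}|f(z)-b|\rightarrow 0$. Your side remark on extending Lemma \ref{lemmaARS} from polynomials to functions analytic near the disk is a point the paper passes over silently, but it does not alter the argument.
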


\begin{proof}
Suppose that $0 < \delta < \delta_0$ and that $\lambda \in S_{\rho}(\zeta, \delta/2)$. A simple geometry exercise shows that there 
exists $\epsilon_2 > 0$, that depends only on $\rho$, such that:
 \[
 \ B(\lambda,\epsilon_2 \delta) \subset S_r(\zeta, \delta).
 \]
Choose $0 < \delta_1 < \delta _0$ such that, for $0 < \delta < \delta _1$, we have 
\[\gamma(E_\delta(\zeta)) < \epsilon_1\epsilon_2\delta,\] 
where $\epsilon_1$ is from Lemma \ref{lemmaARS}. Using Lemma \ref{lemmaARS}, we get
 \[
 \ |f(\lambda ) - b | \le \dfrac{C_1}{\pi (\epsilon_2\delta)^2} \int _{B(\lambda,\epsilon_2 \delta)\setminus E_\delta(\zeta)}
|f(z) - b| dm_2(z) \le C_1 \sup _{z\in S_r(\zeta, \delta)\setminus E_{\delta}(\zeta)}|f(z) - b|.
 \]
Hence, the result follows.
\end{proof}

\begin{proof}
(Theorem III, part (a)) Suppose that $g\in L^s(\mu)$ ($\frac{1}{s} + \frac{1}{t} = 1$) such that $\int pg\, d\mu = 0$, for all the analytic polynomials $p$.
By the proof of \cite[Chapter VII, Lemma 1.7]{jbc}, we may assume that $|g| > 0$ a.e. $\mu$. Define finite, complex-valued 
Borel measure measures $\nu_1$ and $\nu_2$ by: $d\nu_1 = gd\mu$ and $d\nu_2 = fgd\mu$. We apply Theorem \ref{GPTheorem} to $\nu_1$ 
and $\nu_2$
separately and obtain sets $Z_1$ and $E_\delta^1(\zeta)$,  and $Z_2$ and $E_\delta^2(\zeta)$ for which the following hold. If $Z := Z_1\cup Z_2$ 
and $E_\delta(\zeta) := E_\delta^1(\zeta)\cup E_\delta^2(\zeta)$, then $m(Z) = 0$. And if $\zeta\in\mathbb{T}\setminus{Z}$, then, by Theorem IV, part (2):
 \[
 \ \lim_{\delta\rightarrow 0}\frac{\gamma(E_\delta(\zeta))}{\delta} \le A_T(\lim_{\delta\rightarrow 0}
\frac{\gamma(E_\delta ^1(\zeta))}{\delta} + \lim_{\delta\rightarrow 0}\frac{\gamma(E_\delta ^2(\zeta))}{\delta}) = 0.
 \] 
From (e$\,'$) in Theorem \ref{GPTheorem}, we see that, for $\zeta$ in $\mathbb{T}\setminus Z$,
 \begin{eqnarray} \label{ARSEq1}
 \ \ \mathcal C(g\mu ) (\zeta) = \frac{1}{2}g(\zeta) h(\zeta)\bar{\zeta},
 \end{eqnarray} 
and
 \begin{eqnarray} \label{ARSEq2}
 \ \ \mathcal C(fg\mu ) (\zeta) = \frac{1}{2}f(\zeta) g(\zeta) h(\zeta)\bar{\zeta}.
 \end{eqnarray}
For $\lambda$ in $\mathbb D$, we have:
\[
 \ f(\lambda )\mathcal C(g\mu ) (\lambda ) = \mathcal C(fg\mu )(\lambda ).
 \]
Applying (d$\,'$) in Theorem \ref{GPTheorem}, together with \eqref{ARSEq1} and \eqref{ARSEq2}, we conclude that
\[
 \ \underset{\lambda \in S_r(\zeta, \delta)\setminus E_{\delta}(\zeta)}{f(\lambda)g(\zeta)h(\zeta)\bar{\zeta}}  \stackrel{\delta}{\approx}  
f(\zeta) g(\zeta)h(\zeta)\bar{\zeta},
 \]
for $\zeta$ in $\mathbb{T}\setminus Z$.
Since $g$ is nonzero a.e. $\mu$ we can now apply Lemma \ref{NONTL} and get:
\[
 \ \underset{\lambda \in S_{\rho}}{\lim _{\lambda\rightarrow\zeta}} f(\lambda )  = f(\zeta),
 \]
whenever $0 < \rho < r$ and $\zeta\in\mathbb{T}\setminus Z$.  
It is well-known that the existence of nontangential limits is independent of $r$ up to sets of $m$-measure zero, and so our proof is complete.
\end{proof}

We observe that, using Theorem \ref{GPTheorem}, our proof of Theorem III, part (a) can be applied to a variety of (but not all) 
$R^t(K,\mu )$ spaces. 

\begin{remark}
Under the hypothesis of Theorem III, let $\mathcal{M}$ be a nontrivial, closed, shift invariant subspace of $P^t(\mu)$.
In the proof of \cite[Theorem~3.2]{ars02} (i.e., Theorem~III, part (b)) in order to establish that $\mbox{dim}(\mathcal{M}/z\mathcal{M}) =1$,
the authors show that if $\phi\in \mathcal{M}^{\perp}\subset L^s(\mu)$ and $f$ and $g$ are in $\mathcal{M}$, with $g(0) \ne 0$, then the 
function
\[\Phi(\lambda) := \int\dfrac{f(z) - \mbox{\tiny{$\dfrac{f(\lambda)}{g(\lambda)}$}}g(z)}{z - \lambda}\phi(z)\, d\mu(z) = 
\mathcal{C}(f\phi\mu)(\lambda) - \dfrac{f(\lambda)}{g(\lambda)}\mathcal{C}(g\phi\mu)(\lambda)\]
(which is meromorphic in $\mathbb{D}$) has zero nontangential boundary values on a set of 
positive $\mu|_{\mathbb{T}}$ measure, and hence is identically zero. We point out that this can 
also be accomplished by applying Theorem 3.6 in a proof that is similar to Theorem III, part (a),
above.
\end{remark}

We note that Theorem III, part (b) gives us an affirmative answer to the question:

\begin{question}
Let $\mu$ be supported in $\overline{\mathbb{D}}$ such that $P^2(\mu)$ is irreducible, 
$\mbox{abpe}(P^2(\mu))= \mathbb{D}$, and $\mu(\mathbb{T}) = 0$. Suppose that $f$ and $g$ 
are distinct, nontrivial functions in $P^2(\mu)$ that have nontangential limits on a set of positive $m$ 
measure, and let $\mathcal{M} = [f, g]$ -- the closure of $\{fp + gq: \mbox{$p$ and $q$ are polynomials}\}$ 
in $L^2(\mu)$. Does it follow that $\mathcal{M}\ominus z \mathcal{M}$ has dimension one?
\end{question}

We close with the question:

\begin{question}
What milder condition can be imposed, in place of the boundary values assumption on 
$f$ and $g$, and still get the conclusion (in Question 3.10) that $\mbox{dim}(\mathcal{M}\ominus z \mathcal{M}) = 1$?
\end{question}

\bibliographystyle{amsplain}

\end{document}